\DeclareFontFamily{U}{mathx}{\hyphenchar\font45}
\DeclareFontShape{U}{mathx}{m}{n}{
      <5> <6> <7> <8> <9> <10>
      <10.95> <12> <14.4> <17.28> <20.74> <24.88>
      mathx10
      }{}
\DeclareSymbolFont{mathx}{U}{mathx}{m}{n}
\DeclareMathAccent{\widecheck}{0}{mathx}{"71}
\DeclareMathAccent{\wideparen}{0}{mathx}{"75}
\newtheorem{thm}{Theorem}[section]
\newtheorem{cor}[thm]{Corollary}
\newtheorem{prop}[thm]{Proposition}
\newtheorem{lem}[thm]{Lemma}
\newtheorem{re}[thm]{Remark}
\newtheorem{pr}[thm]{Problem}
\newtheorem{qu}[thm]{Question}
\newtheorem{ex}[thm]{Example}
\newcommand{\tq}{\ge_T}
\newcommand{\te}{=_T}
\newcommand{\K}{\mathcal{K}}
\newcommand{\F}{\mathcal{F}}
\newcommand{\cl}[1]{\overline{#1}}
\title{The Shape of Compact Covers}
\author{Ziqin Feng and Paul Gartside}
\date{December 2023}
\begin{document}

\maketitle

\abstract{For a space $X$ let $\K(X)$ be the set of compact subsets of $X$ ordered by inclusion. A map $\phi:\K(X) \to \K(Y)$ is a relative Tukey quotient if it carries compact covers to compact covers. When there is such a Tukey quotient write $(X,\K(X)) \tq (Y,\K(Y))$, and write $(X,\K(X)) \te (Y,\K(Y))$ if  $(X,\K(X)) \tq (Y,\K(Y))$ and vice versa.

We investigate the initial structure of pairs $(X,\K(X))$ under the relative Tukey order, focussing on the case of separable metrizable spaces. Connections are made to Menger spaces.

Applications are given demonstrating the diversity of free topological groups, and related free objects, over separable metrizable spaces. It is shown a topological group $G$ has the countable chain condition if it is either  $\sigma$-pseudocompact or for some separable metrizable $M$, we have $\K(M) \tq (G,\K(G))$.

\smallskip
Keywords: Tukey order, compact covers, separable metrizable space.

MSC Classification: 03E04, 06A07, 22A05, 54D30, 54D45, 54E35, 54H11.
}

\section{Introduction}
 The purpose of this paper is to uncover the possible `shapes' of compact covers of topological spaces, in particular separable metrizable spaces.
Applications are made to distinguish free topological groups of separable metrizable spaces,  and to show that a wide class of topological groups have the countable chain condition (ccc), including those with a compact cover with the same `shape' as that of a separable metrizable space.

The main technical tool - which makes precise the notion of `shape' of a compact cover - is that of the relative Tukey order and equivalence. This line of thought continues work of the authors and others on the Tukey structure of directed sets of the form $\K(X)$, which is the set of compact subsets of a space $X$ ordered by inclusion \cite{GM1,GM2}. It also encompasses  work, arising from functional analysis, studying spaces with a `$P$-ordered compact cover' (see the survey \cite{CO}).

Let $P$ be a directed set, and $P'$ any subset. A subset $C$ of $P$ is \emph{cofinal} for $P'$ (in $P$) if for every $p' \in P'$ there is $c$ from $C$ such that $c \ge p'$. Naturally we abbreviate $(P,P)$ by $P$.
For a space $X$ we have natural pairs $(X,\K(X))$ and $(\F(X),\K(X))$, where $\F(X)$ is the set of all finite subsets of $X$ (also denoted $[X]^{<\omega}$) and (abusing notation) the `$X$' in $(X,\K(X))$ means the singletons of $X$. Observe that the cofinal sets for $X$ in $\K(X)$ are precisely the compact covers of $X$. To compare two pairs, say $(P',P)$ and $(Q',Q)$ we write $(P',P) \tq (Q',Q)$, and say `$(P',P)$ Tukey quotients to $(Q',Q)$' if and only if there is a map (called a relative Tukey quotient) $\phi : P \to Q$ which takes subsets of $P$ cofinal for $P'$ to subsets of $Q$ cofinal for $Q'$. If $(P',P) \tq (Q',Q)$ and $(Q',Q) \tq (P',P)$ then the pairs are said to be \emph{Tukey equivalent}, denoted $(P',P) \te (Q',Q)$.

Observe that a space $X$ is compact if and only if $(X,\K(X)) \te \mathbf{1}$, and is $\sigma$-compact but not compact  if and only if $(X,\K(X)) \te \omega$. Further a space $X$ has a $P$-ordered compact cover if and only if $P \tq (X,\K(X))$.

We start in Section~2 by determining when each of $(X,\K(X))$, $(\mathcal{F}(X),\K(X))$ and $\K(X)$ does, or does not, Tukey quotient to $\omega$. In our case of particular interest, separable metrizable spaces, it turns out that each of them does not Tukey quotient to $\omega$ precisely when the space is compact.

We continue in Section~3 by uncovering the initial structure of the Tukey order on $(M,\K(M))$'s and $(\mathcal{F}(M),\K(M))$'s where $M$ is separable metrizable. Actually the first few steps are known, see \cite[Theorem 3.4]{GM1}. Denote by $(\mathcal{M},\K(\mathcal{M}))$ all pairs $(M,\K(M))$, and by $(\mathcal{F}(\mathcal{M}),\K(\mathcal{M}))$ all pairs $(\mathcal{F}(M),\K(M))$, both ordered by the Tukey order.

Then the initial structure of $(\mathcal{M},\K(\mathcal{M}))$ starts:
(1) the minimum Tukey equivalence class in $(\mathcal{M},\K(\mathcal{M}))$ is $[(\mathbf{1}, \K(\mathbf{1}))]_T$, and
$(M, \K(M))$ is in this class if and only if $M$ is compact;
(2) it has a unique successor, $[(\omega, \K(\omega))]_T$ , which consists of all $(M, \K(M))$ where
M is $\sigma$-compact but not compact; and
(3) this has $[(\omega^\omega , \K(\omega^\omega))]_T = \{(M, \K(M)) : M$ is analytic but not $\sigma$-compact$\}$
as a successor.
The initial structure of $(\mathcal{F}(\mathcal{M}),\K(\mathcal{M}))$ starts identically.

Now the questions are: (1) what are the $(M,\K(M))$ Tukey-above $(\omega^\omega , \K(\omega^\omega))$?     and (2) are there any $(M,\K(M))$ strictly Tukey-above $(\omega,\K(\omega))$ but \emph{not} above $(\omega^\omega , \K(\omega^\omega))$?
At this point Menger spaces enter the discussion. A space is \emph{Menger} if for every sequence of open covers, $(\mathcal{U}_n)_n$, one can select finite $\mathcal{V}_n \subseteq \mathcal{U}_n$ so that their union, $\bigcup_n \mathcal{V}_n$, cover.
A space is \emph{strong Menger} if every finite power is Menger.
Clearly, $\sigma$-compact spaces are strong Menger, but there are, in ZFC,  non-$\sigma$-compact strong Menger subsets of the reals.
Then Theorem~\ref{th:char_Menger} says, for a separable metrizable $M$, that $(M,\K(M)) \not\tq (\omega^\omega , \K(\omega^\omega))$ precisely when $M$ is Menger, and $(\mathcal{F}(M),\K(M)) \not\tq (\omega^\omega , \K(\omega^\omega))$ if and only if $M$ is strong Menger.
This is conceptually an illuminating result. The Menger property was isolated in an (unsuccessful) attempt to characterize $\sigma$-compact spaces in terms of a covering property. Our characterization of separable metrizable Menger spaces manages to connect them back to compact covers.
Theorem~\ref{th:2b-sm} says that,  consistently at least, there are many distinct Tukey classes of $(M,\K(M))$ and $(\mathcal{F}(M),\K(M))$ where $M$ is strong Menger.
It is not clear whether $(\omega^\omega,\K(\omega^\omega))$ has any successors. One candidate (guided by the authors results for $\K(M)$'s) is $(\K(\mathbb{Q}),\K(\K(\mathbb{Q}))$. We obtain partial results on what pairs $(M,\K(M))$ lie Tukey above, below or are incomparable with $(\K(\mathbb{Q}),\K(\K(\mathbb{Q}))$.

In Section~4 we turn to applications. First we connect the Tukey structure of compact covers of a space $X$ (specifically, $(\mathcal{F}(X),\K(X))$, which helps explain our interest in this pair) with those of its free topological group, $F(X)$, and related free algebraic objects. This allows us to show that there is a $2^\mathfrak{c}$-sized family of separable metrizable spaces whose free topological groups (\textit{et cetera}) are all pairwise non homeomorphic; and, consistently, large families of strong Menger separable metrizable spaces with pairwise non homeomorphic free topological groups.
Second we prove a result implying that $\sigma$-pseudocompact topological groups and topological groups with a $\K(M)$-ordered compact cover are ccc, generalizing results of Tkachenko and Uspenskii.

\section{Core Results on Relative Tukey Order}

For a general overview of relative Tukey quotients the reader is referred to \cite{GM1}.
If $\phi$ is a map from $P$ to $Q$ which is order-preserving and $\phi(P')$ is cofinal for $Q'$ in $Q$ then it is a relative Tukey quotient.
Conversely, provided $Q$ is Dedekind complete, then if $(P',P) \tq (Q',Q)$ then there is a
$\phi$  a map from $P$ to $Q$ which is order-preserving and $\phi(P')$ is cofinal for $Q'$ in $Q$. We note that $\K(X)$ is Dedekind complete.
Thus we may, and usually do, assume any given Tukey quotient is order-preserving.
This justifies our claim above that a space $X$ has a `$P$-ordered compact cover', which means there is a compact cover $\{K_p : p \in P\}$ such that $K_p \subseteq K_{p'}$ when $p \le p'$, if and only if $P \tq (X,\K(X))$. Also note that, as $P$ is directed, $P \tq (X,\K(X))$ if and only if $P \tq (\F(X),\K(X))$.

We record some basic Tukey equivalences. When computing Tukey order relations we will replace, for example, $(\omega^\omega,\K(\omega^\omega))$ with $\omega^\omega$, without further comment.
\begin{lem} \

\noindent (1) $(\omega,[\omega]^{<\omega}) \te [\omega]^{<\omega} \te \omega$, and (2) $(\omega^\omega,\K(\omega^\omega)) \te (\mathcal{F}(\omega^\omega,)\K(\omega^\omega)) \te \omega^\omega$.
\end{lem}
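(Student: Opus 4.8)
The plan is to establish the two chains of Tukey equivalences by exhibiting explicit order-preserving maps in each direction, relying on the criterion from the start of Section~2: an order-preserving $\phi : P \to Q$ with $\phi(P')$ cofinal for $Q'$ is a relative Tukey quotient, and since all the directed sets here ($\omega$, $[\omega]^{<\omega}$, $\omega^\omega$, and the $\K(\cdot)$'s) are Dedekind complete, a Tukey quotient can always be taken order-preserving.

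For part (1), I would first unwind the notation. Here $[\omega]^{<\omega} = \F(\omega)$ is the finite subsets of $\omega$ ordered by inclusion, and the unadorned $[\omega]^{<\omega}$ on the right means the pair $([\omega]^{<\omega},[\omega]^{<\omega})$, while $\omega$ means $(\omega,\omega)$. The key observation is that $[\omega]^{<\omega}$ is countable and directed with no maximum, and every element is below some element of the increasing sequence $\{0,1,\dots,n\}$; so the map $n \mapsto \{0,\dots,n\}$ from $\omega$ into $[\omega]^{<\omega}$ is order-preserving with cofinal image, giving $\omega \tq [\omega]^{<\omega}$, and conversely $F \mapsto \max F$ (with $\emptyset \mapsto 0$) is order-preserving and sends the cofinal chain to a cofinal subset of $\omega$, giving $[\omega]^{<\omega} \tq \omega$. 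For the pair $(\omega,[\omega]^{<\omega})$, the ``$\omega$'' denotes the singletons $\{n\}$; I would check that a subset of $[\omega]^{<\omega}$ is cofinal for these singletons exactly when it is cofinal for all of $[\omega]^{<\omega}$ (because a family covering all singletons, being made of finite sets in a directed poset, is automatically cofinal), so $(\omega,[\omega]^{<\omega})$ and $([\omega]^{<\omega},[\omega]^{<\omega})$ have literally the same cofinal sets and the equivalence is immediate.

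For part (2), I would use $\K(\omega^\omega) \te \omega^\omega$, which is the classical fact that $\omega^\omega$ is Tukey equivalent to the compact subsets of itself (a compact $K \subseteq \omega^\omega$ is bounded by some $f \in \omega^\omega$ coordinatewise, and conversely each $f$ dominates the compact box $\prod_n [0,f(n)]$); this realizes both $(\omega^\omega,\K(\omega^\omega)) \te \omega^\omega$ and lets me reduce the middle term. For $(\F(\omega^\omega),\K(\omega^\omega))$, the relevant remark in the excerpt is that since $\omega^\omega$ is a directed set and $P \tq (X,\K(X))$ iff $P \tq (\F(X),\K(X))$, together with the fact that the singletons and the finite subsets of $\omega^\omega$ generate the same cofinal families in $\K(\omega^\omega)$ (a compact cover of $\omega^\omega$ automatically covers every finite set). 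So all three pairs share the same cofinal structure relative to $\omega^\omega$ and collapse to a single class.

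The main obstacle, such as it is, will be purely bookkeeping: keeping straight which of the several ``$\omega$''s and ``$[\omega]^{<\omega}$''s denotes a pair versus a plain directed set, and verifying carefully the two ``automatic cofinality'' claims—that covering all singletons of $X$ by compact sets forces covering all finite (indeed all compact) subsets, and that in $[\omega]^{<\omega}$ dominating every singleton forces cofinality. Both follow from directedness plus the finiteness of the sets being dominated, but they are exactly the points where the relative (as opposed to absolute) Tukey order could in principle differ, so I would state them explicitly rather than treat them as obvious.
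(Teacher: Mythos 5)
Your overall architecture (explicit order-preserving maps, plus the classical fact $\K(\omega^\omega)\te\omega^\omega$ via coordinatewise bounds and boxes $\prod_n\{0,\dots,f(n)\}$) is sound, and your proof of the absolute equivalence $\omega\te[\omega]^{<\omega}$ via $n\mapsto\{0,\dots,n\}$ and $F\mapsto\max F$ is correct. But both of your ``automatic cofinality'' claims --- the ones you yourself flag as the crux --- are false as stated. In $[\omega]^{<\omega}$ the family of all singletons $\{\{n\}:n\in\omega\}$ is cofinal for the singletons but is \emph{not} cofinal for $[\omega]^{<\omega}$: no member contains $\{0,1\}$. Likewise in $\K(\omega^\omega)$ the compact cover $\{\{f\}:f\in\omega^\omega\}$ covers every point, yet no member contains any two-element set, so it is not cofinal for $\F(\omega^\omega)$. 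Directedness of the ambient poset does not rescue this: the relative cofinality condition concerns the family $\mathcal{C}$ itself, which need not be directed or closed under unions. So $(\omega,[\omega]^{<\omega})$ and $[\omega]^{<\omega}$ emphatically do not ``have literally the same cofinal sets,'' and as written your part (1) establishes only the trivial direction $[\omega]^{<\omega}\tq(\omega,[\omega]^{<\omega})$ (identity map); part (2) has the same hole at the step collapsing $(\omega^\omega,\K(\omega^\omega))$ and $(\F(\omega^\omega),\K(\omega^\omega))$.

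The equivalences are nonetheless true, and the correct mechanism is the one behind the paper's remark that ``as $P$ is directed, $P\tq(X,\K(X))$ if and only if $P\tq(\F(X),\K(X))$'': directedness of the \emph{domain} combined with order-preservation of the quotient map, not directedness of the target poset acting on arbitrary covering families. Concretely, for (1) the missing direction $(\omega,[\omega]^{<\omega})\tq[\omega]^{<\omega}$ is witnessed by the order-preserving map $\phi(F)=\{0,\dots,\max F\}$: if $\mathcal{C}$ covers the singletons then for each $n$ some $F\in\mathcal{C}$ has $n\in F$, whence $\phi(F)\supseteq\{0,\dots,n\}$, and such initial segments are cofinal in $[\omega]^{<\omega}$. (Equivalently, compose your own two maps: $(\omega,[\omega]^{<\omega})\tq\omega$ via $\max$, then $\omega\tq[\omega]^{<\omega}$ via initial segments.) For (2), close the cycle $(\F(\omega^\omega),\K(\omega^\omega))\tq(\omega^\omega,\K(\omega^\omega))\tq\omega^\omega\tq(\F(\omega^\omega),\K(\omega^\omega))$: the first arrow is the identity on $\K(\omega^\omega)$ (singletons are finite sets, so cofinal-for-$\F$ implies cofinal-for-singletons), the second is $K\mapsto f_K$ where $f_K(n)=\max\{g(n):g\in K\}$ (if $f\in K$ then $f_K\ge f$, so the image of any compact cover is cofinal in $\omega^\omega$), and the third is the box map $f\mapsto\prod_n\{0,\dots,f(n)\}$, where now directedness of $\omega^\omega$ together with monotonicity of the box map legitimately upgrades pointwise covering to domination of finite sets. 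With these repairs your argument is complete.
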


\subsection{Relative $k$-Calibres}
The purpose of the next two results is to determine when one of our pairs must Tukey quotient to a countably infinite pair. The non-existence of such a quotient is connected to the space being almost compact. This is key to eliminating  `$\times \omega$' factors in later Tukey calculations.

A space $X$ is \emph{countably compact} if every countable open cover has a finite subcover, or equivalently if every closed discrete subset is finite.
A space $X$ is \emph{totally countably compact} if for every sequence $(x_n)_{n \in \omega}$ on $X$ there is an infinite $A \subseteq \omega$ such that $\cl{\{ x_n : n \in A\}}$ is compact.
We introduce a strengthening of total countable compactness as follows.
A space $X$ is \emph{totally countably compact for finite sets} if for every sequence $(F_n)_{n \in \omega}$ of finite subsets of $X$ there is an infinite $A \subseteq \omega$ such that $\cl{\bigcup \{F_n : n \in A\}}$ is compact.
A space $X$ is \emph{$\omega$-bounded} if every countable subset has compact closure.

Clearly $\omega$-bounded implies totally countably compact for finite sets, which implies totally countably compact, which, in turn, implies countably compact.

\begin{pr} \

(1) Find an example of a space which is totally countably compact for finite sets but not $\omega$-bounded.

(2) Find an example of a space which is totally countably compact but not totally countably compact for finite sets.
\end{pr}

%\begin{qu} \    (1) Is a space $X$ totally countably compact for finite sets if and only if all finite powers of $X$ are totally countably compact? (2) Sequentially compact implies totally countably compact. Does it imply totally countably compact for finite sets? \end{qu}

Recall that a directed set $P$ is countably directed (every countable subset has an upper bound) if and only if $P \not\tq \omega$.

\begin{prop}\label{pr:to_ctble} Let $X$ be a space. Then:

(1) $X$ is totally countably compact if and only if $(X,\K(X)) \not\tq [\omega]^{<\omega}$,

(2) $X$ is totally countably compact for finite sets

\hfill if and only if $(\F(X),\K(X)) \not\tq [\omega]^{<\omega}$, and

(3) $X$ is $\omega$-bounded  if and only if

\hfill there is a directed set $P$ such that $P \tq (X,\K(X))$ and $P \not\tq \omega$.
\end{prop}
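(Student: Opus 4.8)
The plan is to prove all three equivalences by first reducing the question of when a pair Tukey-quotients to $[\omega]^{<\omega}$ to the easier question of when it quotients to $\omega$, using $[\omega]^{<\omega} \te \omega$ from the Lemma above. Throughout I would lean on two facts recorded earlier: a relative Tukey quotient into a Dedekind complete target (here $\omega$, and in part (3) $\K(X)$) may be taken order-preserving, and an order-preserving $\phi$ with $\phi(P')$ cofinal for $Q'$ is automatically a relative Tukey quotient; together with the noted fact that $P \not\tq \omega$ exactly when $P$ is countably directed. The conceptual engine, valid for Hausdorff spaces where compacta are closed, is a reformulation of the compactness hypotheses: $X$ \emph{fails} to be totally countably compact precisely when there is an infinite $D \subseteq X$ meeting every compact set in a finite set; and $X$ \emph{fails} to be totally countably compact for finite sets precisely when there is a sequence $(F_n)$ of finite sets such that each compact set contains $F_n$ for only finitely many $n$.

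For the ``not totally countably compact $\Rightarrow$ quotient exists'' direction of (1) and (2) I would exhibit explicit order-preserving maps. For (1), taking $D = \{x_n\}$ distinct as above, set $\phi(K) = \sup\{n+1 : x_n \in K\}$, which is finite by the reformulation; it is monotone, and since $\phi(\{x_n\}) = n+1$ the set $\phi(X)$ is unbounded, so the cofinality criterion makes $\phi$ a relative Tukey quotient to $\omega$. For (2), set $\phi(K) = |\{n : F_n \subseteq K\}|$. The crucial point --- and exactly where the $\F(X)$-pairing does the work --- is that an $\F(X)$-cofinal family must contain, for each $m$, a member $K$ above the finite union $F_0 \cup \dots \cup F_{m-1}$, forcing $\phi(K) \ge m$ and hence $\phi(\F(X))$ unbounded.

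For the converse directions I would argue by contradiction from an order-preserving quotient $\phi$ via its level sets $U_n = \{K : \phi(K) \le n\}$. Since $U_n$ is downward closed, $U_n$ contains a compact cover iff $W_n := \{x : \{x\} \in U_n\}$ (the union of the compacta in $U_n$) equals $X$, and $U_n$ is $\F(X)$-cofinal iff $\F(X) \subseteq U_n$; either case would bound $\phi$ on a cofinal family and contradict the quotient property, so neither holds. For (1) this gives an increasing $(W_n)$ with $\bigcup_n W_n = X$ but each $W_n \neq X$; choosing $x_n \notin W_n$ and applying total countable compactness produces a compact $K$ containing $x_n$ for infinitely many $n$, whence $x_n \in W_n$ for some such $n$, a contradiction. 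Part (2) runs identically with finite sets $F_n$ not contained in any member of $U_n$ and total countable compactness for finite sets.

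For (3) I would take $P = [X]^{\le \omega}$, the countable subsets of $X$ under inclusion, which is countably directed and so satisfies $P \not\tq \omega$. If $X$ is $\omega$-bounded then $S \mapsto \cl{S}$ is a well-defined monotone map $P \to \K(X)$ whose image covers $X$ through singletons, giving a $P$-ordered compact cover, i.e. $P \tq (X,\K(X))$. Conversely, given any countably directed $P$ with a $P$-ordered compact cover $\{K_p\}$, a countable $D = \{x_n\}$ has each $x_n \in K_{p_n}$, an upper bound $q$ of $\{p_n\}$ gathers $D$ into the single compactum $K_q$, and $\cl{D}$ is then a closed subset of a compactum, hence compact. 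The step I expect to be the main obstacle is conceptual, not computational: correctly matching ``a lower level set swallows a cofinal family'' to the right strengthening of countable compactness --- points for (1), finite sets for (2), countable sets for (3) --- and in particular seeing why (2) genuinely needs the $\F(X)$-pairing, since the naive counting map fails for singletons (a compact cover need not contain a single large member) while cofinality for finite sets supplies such members via finite unions.
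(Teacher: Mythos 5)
Your proposal is correct and takes essentially the same route as the paper: your counting map $\phi(K)=|\{n : F_n \subseteq K\}|$ and your extraction of finite sets with unbounded $\phi$-values are exactly the paper's $\phi(K)=\{n : F_n \subseteq K\}$ into $[\omega]^{<\omega}$ and its choice of $F_n$ with $n \in \phi(F_n)$ from cofinality of the image, transported along $[\omega]^{<\omega} \te \omega$, with the same implicit use of closedness of compacta that the paper makes. Part (3) is argued identically, your $P=[X]^{\le\omega}$ with $S \mapsto \cl{S}$ being a cosmetic variant of the paper's $P=\{\cl{C} : C \text{ countable}\}$.
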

\begin{proof}
We prove (2). The argument for (1) is similar and simpler.

Suppose, first, that $X$ is not totally countably compact for finite sets. So there is a sequence $(F_i)_{i \in \omega}$ of finite subsets such that for every infinite $A \subseteq \omega$ we have $\cl{\bigcup \{F_n : n \in A\}}$ not compact. This is the same as saying that for every compact subset $K$ of $X$, for only finitely many $n$ do we have $F_n \subseteq K$.
Define $\phi:\K(X) \to [\omega]^{<\omega}$ by $\phi(K)=\{n \in \omega : F_n \subseteq K\}$. This is well-defined and order-preserving.
For each $n$ in $\omega$, clearly $F_n$ is compact and $n \in  \phi(F_n)$. As $\F(X)$ is directed it follows that the image of $\phi$ is cofinal in $[\omega]^{<\omega}$. In other words, $\phi$ is a relative Tukey quotient of
$(\F(X),\K(X))$ to $[\omega]^{<\omega}$.

Now suppose we are given $\phi$  a relative Tukey quotient of
$(\F(X),\K(X))$ to $[\omega]^{<\omega}$.
We can assume $\phi$ is order-preserving and has image cofinal for $\F(X)$ in $\K(X)$.
In particular, for each $n$ in $\omega$ there is a finite $F_n$ such that $\phi(F_n) \supseteq \{n\}$. This gives a sequence $(F_n)_{n \in \omega}$ of finite subsets of $X$. It witnesses that $X$ is not totally countably compact for finite sets.
To see this, take any infinite $A \subseteq \omega$. If $K=\cl{\bigcup \{F_n : n \in A\}}$ were compact then, as $F_n \subseteq K$ for every $n$ in $A$, $\phi(K)$ would contain the infinite set set $A$, contradicting $\phi$ mapping into the finite subsets of $\omega$. Thus $\cl{\bigcup \{F_n : n \in A\}}$ is not compact, as required.

Now for (3). Suppose $X$ is $\omega$-bounded. Then $P=\{\overline{C} : C$ is countable$\}$ is a countably directed (by inclusion) compact cover.
Conversely, suppose $\mathcal{K}=\{K_p : p \in P\}$ is a $P$-ordered compact cover of $X$ where $P \not\tq \omega$. Then $P$ is countably directed. Take any countable subset $C$ of $X$, for each $x$ in $C$ pick $p_x$ such that $x \in K_{p_x}$. Then $\{p_x : x \in C\}$ has an upper bound, say $p_\infty$, and $C \subseteq K_{p_\infty}$, which is compact.
\end{proof}

\subsection{Products, Powers and Complements}

We collect here useful facts concerning products, powers and complements of Tukey pairs $(X,\K(X))$ and $(\mathcal{F}(X),\K(X))$. Proofs are largely left to the reader. These will be used, mostly without further comment, in the sequel.
\begin{lem}\label{l:FK_powers}
For any spaces $X$ and $Y$:

(1) $(X\times Y,\K(X \times Y)) \te (X,\K(X)) \times (Y,\K(Y))$ and $(\mathcal{F}(X\times Y),\K(X \times Y)) \te (\mathcal{F}(X),\K(X)) \times (\mathcal{F}(Y),\K(Y))$,

(2)    $(\mathcal{F}(X),\mathcal{K}(X)) \te (\mathcal{F}(X),\mathcal{K}(X)) \times (\mathcal{F}(X),\mathcal{K}(X))$, and

(3) $(X,\K(X)) \te (X \oplus X,\K(X \oplus X))$.
\end{lem}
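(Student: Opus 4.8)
The plan is to exhibit, for each of the three equivalences, explicit order-preserving maps in both directions built from the natural operations on compact sets (coordinate projections, products, unions, and diagonals), and to verify each is a relative Tukey quotient by checking that it carries the relevant cofinal family to a cofinal family. Throughout I use the standard product convention: the pair $(P',P) \times (Q',Q)$ has underlying directed set $P \times Q$ ordered coordinatewise, with distinguished subset $P' \times Q'$. By the remark opening this section, it suffices in each case to produce order-preserving maps whose images hit the required distinguished set cofinally; the crucial bookkeeping everywhere is to keep straight what the distinguished subset of each side is and to check cofinality in the correct direction.

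For (1) I would use the coordinate projections $\pi_X, \pi_Y$. For the forward quotient $(X\times Y,\K(X\times Y)) \tq (X,\K(X)) \times (Y,\K(Y))$ take $\phi(K)=(\pi_X(K),\pi_Y(K))$, which is order-preserving into $\K(X) \times \K(Y)$ since projections of compacta are compact; for the reverse take $\psi(A,B)=A\times B$, order-preserving into $\K(X\times Y)$ since products of compacta are compact. To see $\phi$ carries a compact cover $\mathcal{C}$ to a family cofinal for the singletons of $X\times Y$ (resp.\ for $\F(X)\times\F(Y)$), note that each $(x,y)$ (resp.\ each $F_1\times F_2$) lies in some member $K$, whence $(\{x\},\{y\})\le\phi(K)$ (resp.\ $(F_1,F_2)\le\phi(K)$). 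Dually, given a family cofinal for the product's distinguished subset, each finite $F\subseteq X\times Y$ satisfies $F\subseteq\pi_X(F)\times\pi_Y(F)\subseteq A\times B=\psi(A,B)$ for a suitable member $(A,B)$. The singleton and finite versions run in parallel; the only wrinkle is the degenerate empty-factor case, handled by enlarging to a nonempty finite set.

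For (2) I would argue directly rather than through (1), since $(\mathcal{F}(X),\K(X))$ is self-product-stable for a reason special to finite sets, namely their closure under pairwise union. The diagonal $K\mapsto(K,K)$ witnesses $(\mathcal{F}(X),\K(X)) \tq (\mathcal{F}(X),\K(X)) \times (\mathcal{F}(X),\K(X))$, because any pair $(F_1,F_2)$ of finite sets is dominated by $(K,K)$ once $K$ contains the finite set $F_1\cup F_2$. Conversely the union map $(K_1,K_2)\mapsto K_1\cup K_2$ witnesses the reverse quotient, since any finite $F$ is dominated by $K_1\cup K_2$ whenever $(F,\emptyset)\le(K_1,K_2)$ in the source.

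For (3) I would exploit the poset isomorphism $\K(X\oplus X)\cong\K(X)\times\K(X)$ given by $K\mapsto(K\cap X_1,K\cap X_2)$, where $X_1,X_2$ are the two summands, under which the singletons of $X\oplus X$ correspond to the \emph{disjunctive} set $(\mathcal{S}\times\{\emptyset\})\cup(\{\emptyset\}\times\mathcal{S})$, with $\mathcal{S}$ the singletons of $X$. The key point, and the spot most likely to cause a slip, is that this distinguished subset is disjunctive rather than the conjunctive $\mathcal{S}\times\mathcal{S}$ appearing for products: a family is cofinal for it precisely when \emph{both} coordinate projections $\pi_1,\pi_2$ are compact covers of $X$, which is exactly why $X\oplus X$ collapses back to $X$ rather than to a square. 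The diagonal $K\mapsto(K,K)$ then gives $(X,\K(X)) \tq (X\oplus X,\K(X\oplus X))$, and the union $(A,B)\mapsto A\cup B$ gives the reverse, since any $x\in X$ lying in the first coordinate of some member of a cofinal family lands in the corresponding union. I expect the main obstacle across the whole lemma to be precisely this distinguished-subset bookkeeping in (3), distinguishing the disjunctive singleton condition for $\oplus$ from the conjunctive one for $\times$, as every map involved is otherwise entirely routine.
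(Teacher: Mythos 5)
Your proposal is correct, and since the paper explicitly leaves the proofs of this lemma to the reader, your maps (projections and products for (1), diagonal and union for (2), diagonal and the fold/union map under the identification $\K(X \oplus X) \cong \K(X) \times \K(X)$ for (3)) are exactly the natural verifications the authors intend. You also correctly identify the two genuinely delicate points -- that (2) works only because finite sets are closed under union, which is precisely why the singleton analogue fails (cf.\ the paper's Example and Remark~\ref{r}), and that the distinguished set in (3) is disjunctive rather than conjunctive -- so nothing is missing.
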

From claim (1) we have in particular that $(X^2,\K(X^2)) \te (X,\K(X))^2$. In contrast to claim (2) we can not add Tukey equivalence to $(X,\K(X))$, see Remark~\ref{r}.
\begin{ex} Consistently there is a separable metrizable $M$ such that,  $(M,\K(M))$ is not Tukey equivalent to $(M,\K(M)) \times (M,\K(M))$.
\end{ex}

\begin{lem}
Let $\gamma X$ and $\delta X$ be compactifications of a space  $X$.
Then $(\mathcal{S}(\gamma X \setminus X), \K(\gamma X \setminus X) \te (\mathcal{S}(\delta X \setminus X), \K(\delta X \setminus X)$ for $\mathcal{S}=\mathcal{I},\mathcal{F}$ and $\K$.
\end{lem}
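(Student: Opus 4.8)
The plan is to route both remainders through a single common space, using the supremum compactification together with the observation that its two projections have the \emph{same} fibre over $X$. Let $\rho X = \gamma X \vee \delta X$ be the supremum of the two compactifications, realised concretely as the closure of the diagonal $\{(x,x) : x \in X\}$ inside $\gamma X \times \delta X$, and write $f = \pi_\gamma|_{\rho X} : \rho X \to \gamma X$ and $g = \pi_\delta|_{\rho X} : \rho X \to \delta X$ for the coordinate projections, which are continuous surjections that are the identity on $X$.

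The key step -- and the one I expect to be the only real obstacle -- is the identity $f^{-1}(X) = g^{-1}(X) =: X^{*}$. This is exactly what rules out the pathology that a point of a remainder is collapsed into $X$ by one map but not the other; it fails for an arbitrary common upper bound, which is the reason for passing to the \emph{diagonal} realisation of the supremum. Concretely, if $p = (y,z) \in \rho X$ has $g(p) = z \in X$, I would pick a net $(x_\nu)$ in $X$ with $(x_\nu,x_\nu) \to (y,z)$; then $x_\nu \to z$ in $\delta X$, hence $x_\nu \to z$ in $X$ (as $X$ is embedded), hence $x_\nu \to z$ in $\gamma X$; since also $x_\nu \to y$ in the Hausdorff space $\gamma X$ we get $y = z$, so $f(p) = z \in X$. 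Thus $g^{-1}(X) \subseteq f^{-1}(X)$, and by symmetry they are equal.

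With $X^{*}$ in hand, restrict to $A := \rho X \setminus X^{*} = f^{-1}(\gamma X \setminus X) = g^{-1}(\delta X \setminus X)$. The restrictions $\bar f : A \to \gamma X \setminus X$ and $\bar g : A \to \delta X \setminus X$ are continuous surjections, and because $\rho X$ is compact the $\bar f$-preimage (resp. $\bar g$-preimage) of a compact set is compact, being closed in $\rho X$. I would then record the general transfer principle: for a continuous surjection $h : A \to B$ whose preimages of compacta are compact, the image map $K \mapsto h(K)$ and the preimage map $L \mapsto h^{-1}(L)$ are order-preserving with cofinal image (since $h(h^{-1}(L)) = L$ and $h^{-1}(h(K)) \supseteq K$), and both send compact covers to compact covers and finite-set-cofinal families to finite-set-cofinal families; the only bookkeeping here is that $\bigcup h(C) = h(\bigcup C)$, and that every finite $G \subseteq B$ lifts, by surjectivity, to a finite $G' \subseteq A$ with $h(G') = G$, so that $G' \subseteq K$ pushes to $G \subseteq h(K)$ (the preimage direction being symmetric). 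Hence $(\mathcal{S}(A),\K(A)) \te (\mathcal{S}(B),\K(B))$ for each $\mathcal{S} \in \{\mathcal{I},\mathcal{F},\K\}$. Applying this to $\bar f$ and to $\bar g$ yields $(\mathcal{S}(\gamma X \setminus X),\K(\gamma X \setminus X)) \te (\mathcal{S}(A),\K(A)) \te (\mathcal{S}(\delta X \setminus X),\K(\delta X \setminus X))$, as required. The load-bearing ingredient throughout is the fibre identity $f^{-1}(X) = g^{-1}(X)$: it is what lets a single space $A$ map onto \emph{both} remainders at once, and thereby avoids any direct comparison of the (in general very different) spaces $\gamma X \setminus X$ and $\delta X \setminus X$.
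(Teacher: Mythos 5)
Your proof is correct, and it runs on the same engine as the paper's: mediate through a single compactification lying above both, whose canonical map is a Wadge reduction (pulls $X$ back exactly to $X$), and then transfer compact covers via the image map $\phi(K)=f(K)$ and the preimage map $\psi(L)=f^{-1}(L)$. The paper takes a shorter route: by transitivity of Tukey equivalence it reduces to comparing $\gamma X$ with $\beta X$, and simply cites that the canonical extension $f:\beta X\to\gamma X$ of the identity satisfies $f^{-1}(X)=X$, with the same two witnesses you use. You instead realise the supremum $\gamma X \vee \delta X$ as the closure of the diagonal in $\gamma X\times\delta X$ and verify the fibre identity by hand with a net argument; this buys a self-contained, symmetric proof that never invokes the Stone--\v{C}ech machinery, at the cost of a little extra construction, and your transfer-principle bookkeeping (surjectivity for $\mathcal{F}$-cofinality, compact preimages and $h(h^{-1}(L))=L$ for $\K$-cofinality) is all sound. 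One correction to a side remark: your claim that the fibre identity \emph{fails} for an arbitrary common upper bound, and that the diagonal realisation is what rescues it, is not right. If $\kappa X$ is any compactification admitting a continuous map $h$ onto $\gamma X$ restricting to the identity on $X$, then your own net argument already gives $h^{-1}(X)=X$: if $h(p)=x\in X$, take a net $(x_\nu)$ in $X$ with $x_\nu\to p$ in $\kappa X$; then $x_\nu=h(x_\nu)\to x$ in $\gamma X$, hence $x_\nu\to x$ in $X$ and therefore in $\kappa X$, and Hausdorffness forces $p=x\in X$. So the diagonal closure is merely a convenient way to \emph{manufacture} an upper bound, not the reason the fibres over $X$ agree --- which is also exactly why the paper can assert the Wadge-reduction property for $\beta X\to\gamma X$ without further argument.
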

\begin{proof}
By transitivity of Tukey equivalence, we may suppose $\delta X=\beta X$ the Stone-Cech compactification of $X$. Then the identity map $i_X: X \to X$ extends to a map $f:\beta X \to \gamma X$ which is a Wadge reduction ($f^{-1} X=X$) and the claimed Tukey equivalences follow (witnessed by $\phi(K)=f(K)$ and $\psi (L) = f^{-1} L$).
\end{proof}
Let $X$ be a space with compactification $\gamma X$. Set $\widecheck{X}=\gamma X \setminus X$, the remainder of $X$ in $\gamma X$.
By the previous lemma -- up to Tukey equivalence - $\widecheck{X}$ does not depend on the choice of compactification.
Observe that $\K(\gamma X)$ is a compactification of $\K(X)$.
Set $\widecheck{\K(X)}=\K(\gamma X) \setminus \K(X)$, the corresponding remainder of $\K(X)$.
\begin{lem}
 Let $X$ be a space. Then
 \[(\widecheck{X},\K(\widecheck{X})) \te (\widecheck{\K(X)},\K(\widecheck{\K(X)})).\]
\end{lem}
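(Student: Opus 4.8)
The plan is to fix one compactification $\gamma X$ and work entirely inside the compact Hausdorff hyperspace $\K(\gamma X)$ with the Vietoris topology, exactly as in the definitions preceding the statement, so that both remainders are computed from the same $\gamma X$. Write $R=\widecheck{X}=\gamma X\setminus X$; then $\K(R)=\{K\in\K(\gamma X):K\subseteq R\}$, while $\widecheck{\K(X)}=\K(\gamma X)\setminus\K(X)=\{L\in\K(\gamma X):L\cap R\neq\emptyset\}$, viewed as a (non-closed) subspace of $\K(\gamma X)$. The organizing device is the singleton embedding $s:\gamma X\to\K(\gamma X)$, $s(x)=\{x\}$: it is a continuous injection with $s^{-1}(\K(X))=X$, so $s(R)\subseteq\widecheck{\K(X)}$, and since $\gamma X$ is compact and $\K(\gamma X)$ Hausdorff the image $s(\gamma X)$ is compact, hence closed, in $\K(\gamma X)$. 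Throughout I would use the criterion recorded at the start of the section: to obtain a relative quotient it suffices to exhibit an order-preserving map whose restriction to the ``points'' is cofinal for the target ``points'' (legitimate since every $\K(\cdot)$ is Dedekind complete).

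For the direction $(\widecheck{X},\K(\widecheck{X}))\tq(\widecheck{\K(X)},\K(\widecheck{\K(X)}))$ I would define $\Phi:\K(R)\to\K(\widecheck{\K(X)})$ by $\Phi(K)=\{L\in\K(\gamma X):L\cap K\neq\emptyset\}$. Since $K$ is closed in $\gamma X$, the family ``$L$ meets $K$'' is Vietoris-closed, hence a compact subset of $\K(\gamma X)$; and as $K\subseteq R$ every such $L$ meets $R$, so $\Phi(K)\subseteq\widecheck{\K(X)}$, i.e. $\Phi(K)\in\K(\widecheck{\K(X)})$. Clearly $\Phi$ is order-preserving. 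For cofinality on points: given $L_0\in\widecheck{\K(X)}$ choose $r\in L_0\cap R$; then $L_0\in\Phi(\{r\})$, so $\{\Phi(\{r\}):r\in R\}$ is cofinal for the points $\{L_0\}$ of $\widecheck{\K(X)}$. This yields the quotient.

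For the reverse direction $(\widecheck{\K(X)},\K(\widecheck{\K(X)}))\tq(\widecheck{X},\K(\widecheck{X}))$ I would define $\Psi:\K(\widecheck{\K(X)})\to\K(R)$ by extracting the singleton slice, $\Psi(\mathcal{L})=\{x\in\gamma X:\{x\}\in\mathcal{L}\}=s^{-1}\bigl(\mathcal{L}\cap s(\gamma X)\bigr)$. Because $s(\gamma X)$ is closed, $\mathcal{L}\cap s(\gamma X)$ is compact, so $\Psi(\mathcal{L})$ is compact in $\gamma X$; and because $\mathcal{L}\subseteq\widecheck{\K(X)}$ any singleton $\{x\}\in\mathcal{L}$ must meet $R$, forcing $x\in R$, so $\Psi(\mathcal{L})\in\K(R)$. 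Again $\Psi$ is order-preserving. For cofinality on points: given $r\in R$, the point of $\widecheck{\K(X)}$ represented by $L=\{r\}$ corresponds to the one-point family $\{\{r\}\}\in\K(\widecheck{\K(X)})$, and $\Psi(\{\{r\}\})=\{r\}$, so $\{\Psi(\{L\}):L\in\widecheck{\K(X)}\}$ covers $R$. Combining the two directions gives the asserted Tukey equivalence.

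The only real subtlety --- and the step I would be most careful about --- is that $R$ is typically not closed in $\gamma X$ (it is closed exactly when $X$ is locally compact), so the tempting maps $L\mapsto L\cap R$ or $L\mapsto\cl{L\cap R}$ fail: the first need not be compact and the second can spill into $X$. Both maps above are designed to sidestep this: $\Phi$ never intersects with $R$ but uses the Vietoris-closed ``meets $K$'' sets, and $\Psi$ uses only the singleton members of $\mathcal{L}$, for which membership in $\widecheck{\K(X)}$ automatically places the underlying point in $R$. I would double-check the two routine topological facts invoked (that ``meets a closed set'' is Vietoris-closed, and that the singletons form a compact slice of $\K(\gamma X)$), and note that $\Psi(\mathcal{L})$ may be empty when $\mathcal{L}$ contains no singletons, which is harmless since only the singleton families are needed for cofinality.
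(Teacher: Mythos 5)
Your argument is correct and is essentially the paper's own proof: your $\Phi(K)=\{L\in\K(\gamma X):L\cap K\neq\emptyset\}$ is extensionally the very same family as the paper's $\phi(K)=\{\{z\}\cup L : z\in K,\ L\in\K(\gamma X)\}$ (the paper verifies its compactness as a continuous image of $K\times\K(\gamma X)$ where you instead use that ``meets the closed set $K$'' is Vietoris-closed in the compact $\K(\gamma X)$), and your singleton-slice map $\Psi$ is exactly the standard witness for the paper's one-line appeal to $\widecheck{X}$ embedding as a closed subset of $\widecheck{\K(X)}$. The empty-set caveat you flag is harmless, as you note (or patch $\Psi$ by adjoining a fixed point of $\widecheck{X}$).
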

\begin{proof}
Since $X$ embeds as a closed set in $\K(X)$, we see $\widecheck{X}$ embeds as a closed set in $\widecheck{\K(X)}$, so we have $(\widecheck{\K(X)},\K(\widecheck{\K(X)})) \tq (\widecheck{X},\K(\widecheck{X}))$.

For the converse define $\phi : \K(\widecheck{X}) \to \K(\widecheck{\K(X)})$ by
$\phi(K) = \{ \{z\} \cup L : z \in K \, \&\, L \in \K(\gamma X)\}$.
This is well-defined because: $K \ne \emptyset$ so each $\{z\} \cup L$ in $\phi(K)$ is a compact subset of $\gamma X$ not contained in $X$, and the family $\phi(K)$ is the continuous image of $K \times \K(\gamma X)$  and so compact.
Clearly $\phi$ is order-preserving.
Take any $L$ in $\widecheck{\K(X)}$. Then $L$ is a compact subset of $\gamma X$ meeting $\widecheck{X}$, say at $z$. And now we see, $z$ is in $\widecheck{X}$ and $L= \{z\} \cup L \in \phi(\{z\})$, as required for a relative Tukey quotient.
\end{proof}

\section{Compact Covers of  Separable Metrizable}

We expose the initial, section~\ref{ss:init}, and cofinal, section~\ref{ss:cof}, Tukey order structure of $(M,\K(M))$'s and $(\mathcal{F}(M),\K(M))$'s, for separable metrizable spaces, $M$. We start by giving an alternative characterization of the Tukey order in this context.

\begin{thm}\label{th:ch_tq}
    Let $M$ and $N$ be separable metrizable, $\mathcal{C} \subseteq \K(M)$ and $\mathcal{D} \subseteq \K(N)$.
    Then the following are equivalent:

    (1) $(\mathcal{C},\K(M)) \tq (\mathcal{D}, \K(N))$, and

    (2) there is a compact metrizable space $Z$, closed subset $D$ of $\K(M) \times Z$ and continuous $f:D \to N$ such that for every $L \in \mathcal{D}$ there is a compact subset $K'=\{C\}\times K$ of $D$ where $C \in \mathcal{C}$ and $f(K') \supseteq L$.
\end{thm}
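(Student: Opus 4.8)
The plan is to establish the two implications separately; $(2)\Rightarrow(1)$ is a direct construction, while $(1)\Rightarrow(2)$ is the substantive half.

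For $(2)\Rightarrow(1)$, given $Z$, a closed $D\subseteq\K(M)\times Z$ and continuous $f\colon D\to N$, I would define $\phi\colon\K(M)\to\K(N)$ by $\phi(E)=f\bigl(D\cap(\K(E)\times Z)\bigr)$, where $\K(E)=\{E'\in\K(M):E'\subseteq E\}$. Since $\K(E)$ is a compact subset of $\K(M)$ in the Vietoris topology, $\K(E)\times Z$ is compact, so $D\cap(\K(E)\times Z)$ is compact and $\phi(E)$ is a compact subset of $N$; thus $\phi$ is well defined, and it is order-preserving because $E_1\subseteq E_2$ forces $\K(E_1)\subseteq\K(E_2)$. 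To see it is a relative Tukey quotient, let $\mathcal A$ be cofinal for $\mathcal C$ and fix $L\in\mathcal D$. Choose $C\in\mathcal C$ and compact $K\subseteq Z$ with $\{C\}\times K\subseteq D$ and $f(\{C\}\times K)\supseteq L$, and then $A\in\mathcal A$ with $A\supseteq C$. As $C\subseteq A$ we get $\{C\}\times K\subseteq\K(A)\times Z$, hence $\{C\}\times K\subseteq D\cap(\K(A)\times Z)$ and $L\subseteq f(\{C\}\times K)\subseteq\phi(A)$; so $\phi(\mathcal A)$ is cofinal for $\mathcal D$.

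For $(1)\Rightarrow(2)$, I would fix (using that $\K(N)$ is Dedekind complete) an order-preserving relative Tukey quotient $\phi\colon\K(M)\to\K(N)$; taking $\mathcal A=\mathcal C$ shows every $L\in\mathcal D$ has $\phi(C)\supseteq L$ for some $C\in\mathcal C$. The natural attempt is to set $Z=\gamma N$, a metrizable compactification of $N$, to take $D$ to be (a closure of) the graph $\{(E,y)\in\K(M)\times\gamma N:y\in\phi(E)\}$, and to let $f$ be the coordinate projection, so that the fibre over $C$ maps onto $\phi(C)$ and the slice $\{C\}\times\phi(C)$ witnesses (2). The hard part -- and the step I expect to be the main obstacle -- is that $D$ must be closed, yet an order-preserving $\phi$ need not be Vietoris-continuous, so the closure of the graph can acquire points $(E,y)$ with $y$ in the remainder $\gamma N\setminus N$ (the sets $\phi(E_n)$ may run off to infinity in $N$ as $E_n$ converges), and then the projection fails to map $D$ into $N$.

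To get around this I would use two forms of slack. First, for a given $L\in\mathcal D$ I only need one slice whose $f$-image contains the \emph{fixed} compact set $L\subseteq N$, not all of $\phi(C)$; so I can work with the trace data $\{(C_L,L):L\in\mathcal D\}$, choosing $C_L\in\mathcal C$ with $\phi(C_L)\supseteq L$, and assemble $D$ from compact copies of the fixed sets $L$. Second, I am free to choose $Z$ and to design $f$ rather than take a projection. The plan is then to build a closed $D$ whose fibre over each relevant $C_L$ carries a compact set mapping onto $L$, using the second countability and perfect normality of $\K(M)$ and $\K(N)$ to control the accumulation of the chosen slices, and to arrange that $f$ lands in $N$ by construction -- equivalently, to replace $\phi$ by an upper semicontinuous compact-valued $G\colon\K(M)\to\K(N)$ that is cofinal for $\mathcal D$ on $\mathcal C$ and is bounded along convergent sequences in $\K(M)$. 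Verifying that the Tukey-quotient hypothesis permits such a taming, so that no family of slices is forced to escape into $\gamma N\setminus N$, is the crux of the argument.
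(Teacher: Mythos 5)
Your $(2)\Rightarrow(1)$ argument is correct and is essentially the paper's own (the paper uses the same $\phi(E)=f\bigl((\K(E)\times Z)\cap D\bigr)$, and your cofinality check is if anything more careful). The genuine gap is in $(1)\Rightarrow(2)$: you set up exactly the paper's construction --- close the graph of an order-preserving quotient $\phi$ in $\K(M)\times\gamma N$ and project --- but then you abandon it on the mistaken belief that the closure can acquire points $(E,y)$ with $y$ in the remainder $\gamma N\setminus N$ because ``the sets $\phi(E_n)$ may run off to infinity as $E_n$ converges.'' What you offer in its place (assembling $D$ from trace data $(C_L,L)$, designing $f$, ``taming'' $\phi$ into a sequence-bounded upper semicontinuous map) is only a plan, and the step you yourself identify as the crux is left unproved; so the substantive half of the theorem is not established.

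The missing idea is that an order-preserving $\phi\colon\K(M)\to\K(N)$ is \emph{automatically} bounded along convergent sequences in $\K(M)$, so the escape you fear cannot occur. If $E_n\to E$ in $\K(M)$, then $\{E_n : n\in\omega\}\cup\{E\}$ is compact in the Vietoris topology, hence $E'=E\cup\bigcup_n E_n$ is a compact subset of $M$ with $E_n\subseteq E'$ for all $n$; order-preservation gives $\phi(E_n)\subseteq\phi(E')$, and $\phi(E')$ is a single compact subset of $N$, closed in $\gamma N$. Since $\K(M)\times\gamma N$ is metrizable, closure is sequential closure, so every $(E,y)$ in the closure of the graph $\{(E,y): y\in\phi(E)\}$ satisfies $y\in\phi(E')\subseteq N$. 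Thus the naive closed set $D$ already lies in $\K(M)\times N$, the projection $f$ lands in $N$, and for $L\in\mathcal{D}$ a witness $C\in\mathcal{C}$ with $\phi(C)\supseteq L$ (which exists since $\mathcal{C}$ is cofinal for itself) yields the slice $\{C\}\times\phi(C)\subseteq D$ with $f(\{C\}\times\phi(C))=\phi(C)\supseteq L$, exactly as required by (2). This is in substance what the paper does: it closes the relation $C_0=\{(K,L)\in\K(M)\times\K(N): L\subseteq\phi(K)\}$ in $\K(M)\times\K(Z)$ and invokes \cite[Lemma~21]{GM2} --- which encapsulates precisely the boundedness observation above --- to see that second coordinates of the closure stay in $\K(N)$, then cuts down to singleton second coordinates to obtain $D$. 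So your ``natural attempt'' was the right one; it needed only this one-line compactness-of-unions remark, not a new construction.
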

\begin{proof}
To show that (2) implies (1), define $\phi: \K(M) \to \K(N)$ via $\phi(K)=f( (\K(K) \times Z) \cap D)$.
Then $\phi$ is clearly well-defined and order-preserving. Take any $L$ in $\mathcal{D}$. Then we know there is a $\{C\}\times K$  as in the statement of (2).
Now $C$ is in $\mathcal{C}$ and $\phi(C) \supseteq f(\{C\}\times K) \supseteq L$, as required.

Suppose, then, that (1) holds, and $\phi : \K(M) \to \K(N)$ is an order preserving relative Tukey quotient. Let $Z$ be any metrizable compactification of $N$. Let $C_0 = \{ (K,L) \in \K(M) \times \K(N) : L \subseteq \phi(K)\}$. Let $C$ be the closure of $C_0$ in $\K(M) \times \K(Z)$.

We know that $C[\K(M)]  = \{K \subseteq Z : \exists \, L\in \K(M) \ \text{with} \ (L,K) \in C\}  \subseteq \K(N)$ (see Lemma~21 in \cite{GM2}). Let $D=C \cap (\K(M) \times Z)$. Then $D$ is a closed subset of $\K(M) \times Z$. By the previous remark $D$ also equals $C \cap (\K(M) \times N)$. Let $f$ be the projection map from $\K(M) \times Z$ to $Z$ restricted to $D$. We verify that $f$ has the property in (2).

Take any compact set $L$ in $\mathcal{D}$. As $\phi$ is a relative  Tukey quotient there is a $K$ in $\mathcal{C}$ such that $\phi(K) \supseteq L$. Let $L_0 = \{K\} \times \phi(K)$. Then  $L_0$ is a subspace of $\K(M) \times Z$ homeomorphic to $\phi(K)$, which is compact. Now we see that $L_0$ is a compact subset of $C_0$, and hence a compact subset of $C$. Also it is clear from the definitions of $D$ and $L_0$ that $L_0$ is a (compact) subset of $D$, and $f$ carries $L_0$ to $\phi(K)$ which contains $L$.
\end{proof}

\subsection{The Initial Structure of $(\mathcal{M},\K(\mathcal{M}))$ and $(\mathcal{F}(\mathcal{M}),\K(\mathcal{M}))$}\label{ss:init}

Recall that a space is Menger if for every sequence of open covers, $(\mathcal{U}_n)_n$, one can select finite $\mathcal{V}_n \subseteq \mathcal{U}_n$ so that their union, $\bigcup_n \mathcal{V}_n$, cover.
While a space is strong Menger if every finite power is Menger.
The Menger property is preserved by: multiplication with a compact space, closed subsets (hence perfect pre-images), countable unions, and continuous images. By a standard argument we deduce the following lemma.
\begin{lem}\label{l:F_Menger}
    A separable metrizable space $M$ is strong Menger if and only if $\mathcal{F}(M)$ (with the standard, Vietoris, topology) is Menger.
\end{lem}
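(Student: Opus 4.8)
The plan is to prove both directions by relating a sequence of open covers of $\mathcal{F}(M)$ to sequences of open covers of the finite powers $M^k$, exploiting the standard fact that $\mathcal{F}(M)$ with the Vietoris topology is built from the powers $M^k$ via the quotient maps $q_k : M^k \to \mathcal{F}(M)$ sending $(x_1,\dots,x_k)$ to $\{x_1,\dots,x_k\}$. Let me set up the decomposition $\mathcal{F}(M) = \bigcup_k \mathcal{F}_k(M)$, where $\mathcal{F}_k(M)$ denotes the sets of cardinality at most $k$. Each $\mathcal{F}_k(M)$ is a closed subset of $\mathcal{F}(M)$ and is precisely the continuous image $q_k(M^k)$, while the ``diagonal'' structure means $q_k$ is a closed (indeed perfect) surjection onto $\mathcal{F}_k(M)$.

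For the forward direction, suppose $M$ is strong Menger, so every finite power $M^k$ is Menger. Since the Menger property is preserved by continuous images, each $\mathcal{F}_k(M) = q_k(M^k)$ is Menger. The Menger property is also preserved by countable unions, so $\mathcal{F}(M) = \bigcup_k \mathcal{F}_k(M)$ is Menger. This half is essentially immediate from the stated closure properties of Menger, so the ``standard argument'' alluded to in the statement does most of the work.

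For the converse, suppose $\mathcal{F}(M)$ is Menger; I want to conclude each power $M^k$ is Menger. The key is that Menger is preserved by closed subsets and by perfect preimages (both stated in the excerpt). First observe $M^k$ maps onto $\mathcal{F}_k(M)$ via the perfect map $q_k$, so if $\mathcal{F}_k(M)$ is Menger then so is its perfect preimage $M^k$. Since $\mathcal{F}_k(M)$ is a closed subset of the Menger space $\mathcal{F}(M)$, it is itself Menger. Hence $M^k$ is Menger for every $k$, giving that $M$ is strong Menger.

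The main obstacle is the careful topological bookkeeping around the map $q_k$: one must verify that $q_k : M^k \to \mathcal{F}_k(M)$ is genuinely a perfect (closed, with compact fibers) surjection in the Vietoris topology, and that $\mathcal{F}_k(M)$ is closed in $\mathcal{F}(M)$. The fibers of $q_k$ are finite (preimages are obtained by permuting coordinates and collapsing repeats), hence compact, and closedness of $q_k$ follows from a routine Vietoris-neighborhood argument; closedness of $\mathcal{F}_k(M)$ in $\mathcal{F}(M)$ is the statement that ``having at most $k$ points'' is preserved under Vietoris limits. These verifications are standard but are the genuine content underlying the one-line appeal to preservation properties, so I would isolate them as the crux and treat the Menger-preservation applications as formal consequences.
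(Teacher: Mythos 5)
Your proof is correct and is essentially the paper's own argument: the paper proves the lemma with a one-line appeal to ``a standard argument'' from the preservation of Menger under continuous images, countable unions, closed subsets and perfect preimages, and your decomposition $\mathcal{F}(M)=\bigcup_k q_k(M^k)$ with $q_k:M^k\to\mathcal{F}_k(M)$ perfect is exactly the intended instantiation of those facts. Your identification of the perfectness of $q_k$ and the closedness of $\mathcal{F}_k(M)$ as the only genuine content to verify is accurate, and your sketch of those verifications is sound.
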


Next we connect the Menger property to compact covers.
\begin{lem}\label{l:tq_M} Let $M$ and $N$ be separable metrizable spaces.

(1) If  $(M,\K(M)) \tq (N,\K(N))$ and $M$ is Menger then $N$ is also Menger.

(2)  If  $(\mathcal{F}(M),\K(M)) \tq (\mathcal{F}(N),\K(N))$ and  $M$ is strong Menger then  $N$ is strong  Menger.
\end{lem}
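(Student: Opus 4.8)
The plan is to use Theorem~\ref{th:ch_tq} together with the stated preservation properties of the Menger property, exploiting the characterization in Lemma~\ref{l:F_Menger} for part (2). Both statements should follow the same template: a relative Tukey quotient provides, via the theorem, a continuous map out of a product of a Menger-type space with a compact space, and we arrange the target space (or its finite-sets version) to appear as a continuous image of such a product, whereupon the listed closure properties of Menger force the conclusion.

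For part (1), suppose $(M,\K(M)) \tq (N,\K(N))$ with $M$ Menger. I would first try to show directly that $N$ is a continuous image of $M \times Z$ for a suitable compact metrizable $Z$, since then $N$ is Menger because multiplying by a compact space and taking continuous images both preserve Menger. Applying Theorem~\ref{th:ch_tq} with $\mathcal{C}=M$ (the singletons) and $\mathcal{D}=N$ (the singletons), I get a compact metrizable $Z$, a closed $D \subseteq \K(M) \times Z$, and continuous $f:D \to N$ such that every singleton $\{y\} \in \mathcal{D}$ lies in $f(K')$ for some compact $K' = \{C\} \times K$ with $C$ a singleton of $M$. In particular $f$ is onto $N$, since each point of $N$ is covered. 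The subtlety is that $f$ is defined on $D \subseteq \K(M) \times Z$ rather than on $M \times Z$ itself; but the condition pins the relevant compact sets $K'$ to slices $\{C\} \times K$ over \emph{singletons} $C \in M \subseteq \K(M)$, so the part of $D$ that matters sits over the copy of $M$ inside $\K(M)$. I would therefore restrict $f$ to $D \cap (M \times Z)$ (where $M$ denotes the singletons, a closed copy of $M$ in $\K(M)$), note this restriction is still continuous with image all of $N$, and observe its domain is a closed subset of the Menger space $M \times Z$. Since closed subsets of Menger spaces are Menger and continuous images of Menger spaces are Menger, $N$ is Menger.

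For part (2), the same strategy applies but with $\mathcal{F}(M)$ and $\mathcal{F}(N)$ in place of the singletons, and here Lemma~\ref{l:F_Menger} does the conceptual work: $M$ strong Menger is equivalent to $\mathcal{F}(M)$ being Menger, and it suffices to prove $\mathcal{F}(N)$ is Menger. From $(\mathcal{F}(M),\K(M)) \tq (\mathcal{F}(N),\K(N))$ and Theorem~\ref{th:ch_tq} with $\mathcal{C}=\mathcal{F}(M)$, $\mathcal{D}=\mathcal{F}(N)$, I again obtain compact metrizable $Z$, closed $D \subseteq \K(M)\times Z$, and continuous $f:D \to N$ with the covering property now quantified over finite sets $L \in \mathcal{F}(N)$. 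The target to hit is $\mathcal{F}(N)$ rather than $N$, so I would compose with the natural map sending a compact set to the closure of its image and pass to the induced map on finite subsets, using that the copy of $\mathcal{F}(M)$ sits inside $\K(M)$ and that $\mathcal{F}(M)\times Z$ (equivalently a closed subset thereof) is Menger by Lemma~\ref{l:F_Menger}; the continuous-image and closed-subset closure properties then yield that $\mathcal{F}(N)$ is Menger, hence $N$ is strong Menger.

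The main obstacle I anticipate is bookkeeping around the two different ``levels'': the map $f$ from Theorem~\ref{th:ch_tq} lives on a closed subset of $\K(M) \times Z$, but the Menger hypothesis is about $M$ (or $\mathcal{F}(M)$), so I must verify that the portions of $D$ genuinely required to cover $N$ (resp.\ $\mathcal{F}(N)$) can be confined to lie over the closed copy of $M$ (resp.\ $\mathcal{F}(M)$) inside $\K(M)$, not over all of $\K(M)$. The covering condition in (2) is stated precisely so that the relevant compact slices $\{C\}\times K$ have first coordinate $C \in \mathcal{C}$, which is exactly $M$ or $\mathcal{F}(M)$; confirming that restricting to these slices loses no surjectivity, and that the restricted domain remains a closed subset of the Menger space in question, is the crux and should be where care is needed. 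Once that is in hand, the result is immediate from the three quoted preservation properties (product with compact, closed subspace, continuous image).
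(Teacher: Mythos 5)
Your part (1) is correct and is essentially verbatim the paper's argument: apply Theorem~\ref{th:ch_tq}, restrict $f$ to $D'=D\cap(M\times Z)$ (with $M$ the singleton copy inside $\K(M)$), note that the covering condition places the witnessing slices $\{C\}\times K$ inside $D'$ so that $f'$ is onto $N$, and finish with the three preservation properties (product with compact, closed subspace, continuous image). The confinement issue you flag is handled exactly as you propose.

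Part (2), as written, has a genuine gap at its last step. Restricting to $D'=D\cap(\mathcal{F}(M)\times Z)$ and citing Lemma~\ref{l:F_Menger} gives you only that $D'$ is a closed subspace of the \emph{Menger} space $\mathcal{F}(M)\times Z$, hence Menger, and hence that $N=f'(D')$ is Menger --- which is part (1)'s conclusion, not strong Menger. To get $\mathcal{F}(N)$ Menger via ``the induced map on finite subsets'' you need its domain, $\mathcal{F}(D')$, to be Menger, and by Lemma~\ref{l:F_Menger} itself that is equivalent to $D'$ being \emph{strong} Menger; plain Mengerness of $D'$ does not suffice, since Menger is not preserved by finite powers (the paper's Remark~\ref{r} records consistent Menger sets with non-Menger square). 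The missing, and necessary, observation is that $\mathcal{F}(M)$ is itself strong Menger when $M$ is: $\mathcal{F}(M)^k=\bigcup_n \mathcal{F}_n(M)^k$, where $\mathcal{F}_n(M)$ (sets of size at most $n$) is a continuous image of $M^n$, so each $\mathcal{F}_n(M)^k$ is a continuous image of the Menger space $M^{nk}$, and countable unions and continuous images preserve Menger. With that in hand, $(D')^k$ is closed in $\mathcal{F}(M)^k\times Z^k$, so $D'$ is strong Menger, and either route closes the argument: $\mathcal{F}(D')$ is Menger and the continuous surjection $F\mapsto f'(F)$ carries it onto $\mathcal{F}(N)$; or, bypassing hyperspaces entirely, $(f')^k:(D')^k\to N^k$ exhibits each $N^k$ as Menger directly. (Incidentally, ``sending a compact set to the closure of its image'' is a red herring: continuous images of compacta are already compact, and no such composition is needed.) The paper elides part (2) with ``follows similarly using Lemma~\ref{l:F_Menger}'', but the similarity runs precisely through the Mengerness of all finite powers of $\mathcal{F}(M)$, which your write-up replaces with Mengerness of $\mathcal{F}(M)$ alone --- that substitution is exactly where the argument would fail.
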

\begin{proof} We prove part (1). Part (2) follows similarly using Lemma~\ref{l:F_Menger}.

As $(M,\K(M)) \tq (N,\K(N))$, we know from Theorem~\ref{th:ch_tq}     there is a compact metrizable space $Z$, closed subset $D$ of $\K(M) \times Z$ and continuous map $f$ of $D$ into $N$ satisfying condition (2) of the theorem.
Let $D'=D \cap (M \times Z)$ and $f':D'\to N$ be $f$ restricted to $D'$. Then the covering property of $f$ implies $f'$ is surjective.
If $M$ is Menger, then so are, in turn, $M \times Z$, $D$ and $N$ (via $f'$).
\end{proof}

\begin{thm}\label{th:char_Menger} Let $M$ be separable metrizable. Then:

(1) $\K(M) \not\tq (\omega^\omega, \K(\omega^\omega))$ if and only if $M$ is locally compact (equivalently, $\K(M)$ Menger),

(2) $(M,\K(M))   \not\tq (\omega^\omega, \K(\omega^\omega))$ if and only if $M$ is Menger, and

(3) $(\mathcal{F}(M),\K(M))   \not\tq (\omega^\omega, \K(\omega^\omega))$ if and only if  $M$ is strong Menger.
\end{thm}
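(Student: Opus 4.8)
The plan is to prove each biconditional by splitting into its two directions, obtaining the ``easy'' direction (the space being (strong/locally) Menger forces the non-existence of a quotient) straight from Lemma~\ref{l:tq_M}, and constructing an explicit quotient for the contrapositive of the ``hard'' direction. Throughout I use the basic equivalences $(\omega^\omega,\K(\omega^\omega)) \te (\F(\omega^\omega),\K(\omega^\omega)) \te \omega^\omega$ and the classical fact that $\omega^\omega$ is \emph{not} Menger. For the backward direction of (2): if $(M,\K(M)) \tq (\omega^\omega,\K(\omega^\omega))$ with $M$ Menger, then Lemma~\ref{l:tq_M}(1) would make $\omega^\omega$ Menger, a contradiction. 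For the backward direction of (3): if $(\F(M),\K(M)) \tq (\omega^\omega,\K(\omega^\omega))$ with $M$ strong Menger, then rewriting the target as $(\F(\omega^\omega),\K(\omega^\omega))$ via the basic equivalences and applying Lemma~\ref{l:tq_M}(2) would make $\omega^\omega$ strong Menger, again impossible.

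The substance is the forward direction of (2), which I prove in contrapositive: assuming $M$ is not Menger I construct $(M,\K(M)) \tq \omega^\omega$. Fix a witness, a sequence $(\mathcal{U}_n)_n$ of open covers admitting no finite selections $\mathcal{V}_n \subseteq \mathcal{U}_n$ with $\bigcup_n \mathcal{V}_n$ covering. As $M$ is Lindel\"of I replace each $\mathcal{U}_n$ by a countable subcover (a finite selection from the subcovers is one from the originals, so the witnessing property survives) and pass to the increasing unions $V^n_k = \bigcup_{j \le k} U^n_j$; this yields, for each $n$, an increasing open cover $(V^n_k)_k$ of $M$ such that for every $f \in \omega^\omega$ the set $\bigcup_n V^n_{f(n)}$ is a proper subset of $M$. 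Define $\phi : \K(M) \to \omega^\omega$ by $\phi(K)(n) = \min\{k : K \subseteq V^n_k\}$, which is well defined since $K$ is compact and the $V^n_k$ increase to $M$, and is clearly order-preserving. Given a compact cover $\mathcal{C}$ of $M$ and any $f \in \omega^\omega$, choose $x \notin \bigcup_n V^n_{f(n)}$ and $C \in \mathcal{C}$ with $x \in C$; then $C \not\subseteq V^n_{f(n)}$, so $\phi(C)(n) > f(n)$ for all $n$, in particular $\phi(C) \ge^* f$. Hence $\phi$ sends compact covers to dominating (cofinal) subsets of $\omega^\omega$, giving $(M,\K(M)) \tq \omega^\omega \te (\omega^\omega,\K(\omega^\omega))$.

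The forward direction of (3) then reduces to (2). If $M$ is not strong Menger, some finite power $M^k$ is not Menger, so by (2) together with Lemma~\ref{l:FK_powers}(1) we get $(M,\K(M))^k \te (M^k,\K(M^k)) \tq \omega^\omega$. Since a family cofinal for $\F(M)$ is cofinal for the singletons, the identity witnesses $(\F(M),\K(M)) \tq (M,\K(M))$; combining this with $(\F(M),\K(M)) \te (\F(M),\K(M))^k$ from Lemma~\ref{l:FK_powers}(2) and taking products yields $(\F(M),\K(M)) \te (\F(M),\K(M))^k \tq (M,\K(M))^k \tq \omega^\omega$, as required.

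For (1), local compactness of separable metrizable $M$ gives $\sigma$-compactness, indeed $M = \bigcup_n K_n$ with $K_n \subseteq \mathrm{int}\,K_{n+1}$, whence every compact set lies in some $K_n$ and $\K(M) = \bigcup_n \K(K_n)$ is $\sigma$-compact (hence the Vietoris space $\K(M)$ is Menger) and $\K(M) \te \omega$ as a directed set; since $\mathrm{cof}(\omega^\omega)=\mathfrak{d} > \aleph_0 = \mathrm{cof}(\K(M))$ and $\tq$ cannot increase the cofinality required, $\K(M) \not\tq \omega^\omega$. Conversely, when $M$ is not locally compact one has $\K(M) \tq \omega^\omega$ (and $\K(M)$ is non-Menger, containing a closed copy of $\omega^\omega$ built at a point with no compact neighbourhood); this is the known structural behaviour of compact covers of non-locally-compact separable metrizable spaces, which I would invoke from \cite{GM1,GM2}. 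The main obstacle is the forward direction of (2)---extracting the increasing covers from the failure of the Menger property and verifying that the resulting $\phi$ dominates along every compact cover; part (3) is then formal, and part (1) rests on the cited hyperspace and Tukey facts for the non-locally-compact case.
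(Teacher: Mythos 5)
Your proposal is correct, and it splits from the paper's proof in an instructive way. For part (2) --- the heart of the theorem --- your argument is the paper's in mirror image: the paper assumes $(M,\K(M)) \not\tq \omega^\omega$ and extracts the finite selections from the failure of the point map $x \mapsto f_x$, $f_x(n)=\min\{m : x\in\bigcup_{i\le m}U^n_i\}$, to have cofinal image, whereas you assume $M$ is not Menger and build the quotient $\phi(K)(n)=\min\{k : K\subseteq V^n_k\}$ directly, checking that compact covers map to dominating families. Same combinatorics (countable subcovers, initial-segment unions, a min function), opposite contrapositive; your version has the mild advantage of making explicit the quotient that the paper leaves implicit in the step ``$\phi'(M)$ is not cofinal.'' Parts (1) and (3) genuinely diverge. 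For (3) the paper passes to the hyperspace, combining Lemma~\ref{l:F_Menger} ($M$ strong Menger iff $\F(M)$ Menger) with a rerun of the part (2) argument for $\F(M)$ in place of $M$; you instead stay with powers: some $M^k$ is not Menger, and then $(\F(M),\K(M)) \te (\F(M),\K(M))^k \tq (M,\K(M))^k \te (M^k,\K(M^k)) \tq \omega^\omega$, using Lemma~\ref{l:FK_powers}, the standard quotient $(\F(M),\K(M)) \tq (M,\K(M))$, and the (routine, but worth stating) fact that products of relative Tukey quotients are relative Tukey quotients. This is arguably cleaner, since it avoids any covering argument on the Vietoris space; the paper's route buys the hyperspace reformulation it wants anyway for Lemma~\ref{l:tq_M}(2). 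For (1) the paper bootstraps from part (2) applied to the separable metrizable space $\K(M)$ via $\K(M) \te (\K(M),\K(\K(M)))$, then settles Menger-ness of $\K(M)$ through the metric fan $\mathbb{F}$; your forward direction is more elementary --- a countable cofinal family in $\K(M)$ cannot be carried to a family of compact sets covering $\omega^\omega$, whose least size is $\mathfrak{d}>\aleph_0$ --- while your backward direction is cited rather than argued. The facts you would cite ($M$ non-locally-compact yields a closed copy of $\mathbb{F}$, whence $\K(M) \tq \K(\mathbb{F}) \te \omega^\omega$, and $\K(\mathbb{F})$ is Polish, not $\sigma$-compact, hence not Menger) are exactly at the citation level the paper itself uses, so this is a legitimate discharge; note only that the parenthetical ``equivalently, $\K(M)$ Menger'' requires both halves you supply in passing ($\sigma$-compactness of $\K(M)$ in the locally compact case, non-Menger-ness of $\K(\mathbb{F})$ otherwise), so it should be flagged as part of the claim rather than an aside.
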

\begin{proof}
For (1): Since $\K(M) \te (\K(M), \K( \K(M)))$,  by part (2), we know that $\K(M) \not\tq (\omega^\omega, \K(\omega^\omega))$ if and only if $\K(M)$ is Menger.
Recall that $\K(M)$ is $\sigma$-compact only when $M$ is locally compact.
On the other hand, $M$ is not locally compact if and only if it contains a closed copy of the metric fan, $\mathbb{F}$.
Noting that $\K(\mathbb{F})$ is not Menger (it is Polish but not $\sigma$-compact) completes the argument.

\smallskip

For (2): As the space of irrationals, $\omega^\omega$, is not Menger, by Lemma~\ref{l:tq_M}(1),  if  $(M,\K(M)) \tq (\omega^\omega, \K(\omega^\omega))$  then $M$ is not Menger.

%[Original argument:] On the other hand, suppose $M$ is not Menger. Then there is a subspace $M_0$ of the Cantor set such that $(M_0,\K(M_0)) \te (M,\K(M))$ such that $M$ is the continuous image of $M_0$. So we may suppose that $M$ is zero-dimensional. As $M$ is not compact $(M,\K(M)) \te (M\times \omega, \K(M)\times \omega)$. Also note that $(\omega^\omega, \K(\omega^\omega)) \te \omega^\omega$ XXX add lemma. So it suffices to show that $(M\times \omega, \K(M)\times \omega) \tq \omega^\omega$.

%As $M$ is zero-dimensional, there is a continuous map $f: M \to \omega^\omega$ such that $f(M)$ is cofinal in $(\omega^\omega,\le^*)$. For each $n$ let $c_n$ be constantly equal to $n$. Define $\phi:\K(M) \times \omega \to \omega^\omega$ by $\phi(K,n) = \max(c_n,f_K)$ where $f_K(m) = \max \pi_m(f(K))$. Note that this is well-defined and order-preserving.
%It remains to check that $\phi(M)$ is cofinal in $\omega^\omega$. Take any $\sigma$ in $\omega^\omega$. As $f(M)$ is cofinal in $(\omega^\omega,\le^*)$ we can find $x$ in $M$ and $n$ in $\omega$ such that $\sigma \le \max (c_n,f(x))$. And now we see $\sigma \le \phi(\{x\},n)$, as required.

%[Alternative argument:]
Now we assume
$(M,\K(M))   \not\tq (\omega^\omega, \K(\omega^\omega))$ and show $M$ Menger.
Take any sequence of open covers $(\mathcal{U}_n)_{n \in \omega}$. As $M$ is Lindel\"{o}f we can assume each $\mathcal{U}_n$ is countable, say $\mathcal{U}_n=\{U^n_m : m \in \omega\}$.
For $x$ in $M$ define $f_x \in \omega^\omega$ by $f_x(n)=\min \{m  : x\in \bigcup_{i=0}^m U^n_m\}$.
Define $\phi': M \to \omega^\omega$ by $\phi'(x)=f_x$.

Take any compact subset $K$ of $M$. We show $\phi'(K)$ is bounded in $\omega^\omega$. To see this note that for each $n$, $\mathcal{U}_n$ covers $K$, so we can pick $f(n)=m$ such that $\{U^n_0,\ldots,U^n_m\}$ cover $K$. Now $\phi'(K) \le f$.

Since $(M,\K(M))   \not\tq (\omega^\omega, \K(\omega^\omega))$ we see that $\phi'(M)$ is not cofinal in $\omega^\omega$. So there is an $f$ such that for every $x$ in $M$ there is an $n_x$ such that $f_x(n_x) < f(n_x)$. For each $n$ let $\mathcal{V}_n=\{U^n_0,\ldots,U^n_{f(n)}\}$, a finite subcollection of $\mathcal{U}_n$.
We complete the proof by showing $\bigcup_n \bigcup \mathcal{V}_n$ covers. Well take any $x$ in $M$,
then $f_x(n_x) < f(n_x)$, so $x \in U^n_{f_x(n_x)} \in \mathcal{V}_{n_x}$.

\smallskip

For (3):
  To see this recall Lemmas~\ref{l:tq_M}(2) and~\ref{l:F_Menger}, and apply the argument for part (2) to the space $\mathcal{F}(M)$ in place of $M$.
\end{proof}

\begin{qu}
 Let $X$ be a Lindel\"{o}f space. Is it the case that $X$ is Menger if and only if    $(X,\K(X))   \not\tq (\omega^\omega, \K(\omega^\omega))$? And is $X$  strong Menger if and only if    $(\mathcal{F}(X),\K(X))   \not\tq (\omega^\omega, \K(\omega^\omega))$?
\end{qu}
Note that the proof given above shows for any Lindel\"{o}f $X$, $(X,\K(X))   \not\tq (\omega^\omega, \K(\omega^\omega))$ implies $X$ is Menger. But the converse depends on Theorem~\ref{th:ch_tq} which requires metrizability.

\begin{re}\label{r}
  Consistently \cite{MengerSquare} there are Menger sets, $M$, such that $M^2$ is not Menger. For such an $M$ we have $(M,\K(M)) \not\tq (\omega,\K(\omega^\omega))$ but $(M^2,\K(M)^2) \tq (\omega,\K(\omega^\omega))$.
  In particular, $(M,\K(M)) \not\te (M^2,\K(M^2))\te (M,\K(M))^2$.
\end{re}

At least consistently there are many strong Menger sets whose families of compact subsets are distinct up to Tukey equivalence.

\begin{thm}\label{th:2b-sm}  If $2^\mathfrak{b} > \mathfrak{c}$ then there is a family  $\mathcal{S}$ of  $2^\mathfrak{b}$-many strong Menger sets such that  $(\mathcal{F}(M),\K(M)) \not\te (\mathcal{F}(N),\K(N))$ for distinct $M$ and $N$ from $\mathcal{S}$.
\end{thm}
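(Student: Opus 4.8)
The plan is to realize $2^{\mathfrak b}$-many pairwise Tukey-inequivalent pairs among subsets of $\omega^\omega$, and to harvest them by a counting argument that is exactly calibrated to consume the hypothesis $2^{\mathfrak b} > \mathfrak c$. First I would reduce everything to the bounding structure of $(\omega^\omega,\le^*)$. Since for $X \subseteq \omega^\omega$ a subset has compact closure precisely when it is pointwise bounded, the pair $(\F(X),\K(X))$ is governed entirely by which subsets of $X$ are bounded; and Theorem~\ref{th:ch_tq} lets me test $\tq$ between two such pairs through continuous maps on closed subsets of $\K(X)\times Z$ rather than through the spaces directly. I would also record the two facts about strong Mengerness that I need: first, any $X \subseteq \omega^\omega$ with $|X| < \mathfrak d$ is strong Menger, since all its finite powers have size $< \mathfrak d$ and so no continuous image of them is dominating; second, by Theorem~\ref{th:char_Menger}(3) (with Lemma~\ref{l:F_Menger}) strong Mengerness is exactly $(\F(X),\K(X)) \not\tq \omega^\omega$, which pins all the pairs I construct strictly below $\omega^\omega$ in the Tukey order.

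Next I would fix a $\le^*$-increasing, $\le^*$-unbounded, non-dominating sequence $\langle f_\alpha : \alpha < \mathfrak b\rangle$ and, for each $A \subseteq \mathfrak b$, assemble a set $M_A \subseteq \omega^\omega$ of size $\le \mathfrak b$ whose relatively-compact ideal codes $A$: the idea is to attach to each coordinate $\alpha$ a small gadget, chosen so that the way unboundedness first appears along the scale reflects whether $\alpha \in A$, while keeping $|M_A| \le \mathfrak b$. Each $M_A$ is then strong Menger: when $\mathfrak b < \mathfrak d$ this is automatic from $|M_A| < \mathfrak d$, and when $\mathfrak b = \mathfrak d$ I would instead build all the $M_A$ inside a fixed strong Menger set, arranging by the non-dominating choice of scale that every continuous image of every finite power of $M_A$ remains non-dominating. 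This bookkeeping is a genuine but routine technical point rather than the crux.

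The extraction then runs as follows. I would attach to $M$ a Tukey invariant $\Theta(M)$ — an additivity/cofinality spectrum read off from $\K(M)$ — verify that $\Theta$ is preserved by $\te$, and check that $\Theta(M_A)$ pins down $A$ up to at most $\mathfrak c$ alternatives; equivalently, that the map $A \mapsto [(\F(M_A),\K(M_A))]_T$ has all fibres of size $\le \mathfrak c$. Granting this, the family $\{M_A : A \subseteq \mathfrak b\}$ has $2^{\mathfrak b}$ members distributed into Tukey classes each meeting it in at most $\mathfrak c$ sets; since $\mathfrak c < 2^{\mathfrak b}$, the number of classes must be $2^{\mathfrak b}$, and picking one representative per class yields the desired family $\mathcal S$.

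The hard part is precisely the fibre bound, i.e.\ showing that a fixed $M_A$ can be Tukey-equivalent to at most $\mathfrak c$ of the $M_B$. Here I would lean on Theorem~\ref{th:ch_tq}: any witness to $(\F(M_B),\K(M_B)) \tq (\F(M_A),\K(M_A))$ is implemented by a continuous map $f$ on a closed subset of $\K(M_A)\times Z$ with $Z$ compact metrizable, and such a map on a separable metrizable domain is determined by its values on a countable dense set, so there are at most $\mathfrak c^{\aleph_0} = \mathfrak c$ of them. The delicate step is to argue that from this countable data one recovers a cofinal, hence essentially all, of the target $M_B$, so that only $\le \mathfrak c$ distinct targets $M_B$ can be equivalent to a fixed $M_A$. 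Making this reconstruction precise — and ensuring $\Theta$ is simultaneously coarse enough to be a $\te$-invariant and fine enough to keep the fibres below $\mathfrak c$ — is where I expect essentially all the difficulty of the theorem to lie.
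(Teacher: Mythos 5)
Your counting skeleton is exactly the paper's: produce $2^{\mathfrak{b}}$-many pairwise distinct strong Menger subsets of the reals, observe that each Tukey equivalence class of pairs $(\F(N),\K(N))$ contains at most $\mathfrak{c}$-many subsets of $\mathbb{R}$, and pigeonhole using $2^{\mathfrak{b}} > \mathfrak{c}$. Two comments on what you call the hard part, the fibre bound. First, it is not where the paper locates the difficulty: the paper simply cites \cite{GM1} for it. Second, your sketch of it is essentially sound, and the ``delicate reconstruction step'' you worry about is in fact immediate from Theorem~\ref{th:ch_tq}: if $(Z,D,f)$ witnesses $(\F(M),\K(M)) \tq (\F(N),\K(N))$, then $f$ maps $D$ into $N$, while the covering condition applied to singletons of $N$ gives $N \subseteq f(D)$; hence $N = f(D)$ is recovered exactly, not merely cofinally, from the triple. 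Since $\K(M)\times Z$ is separable metrizable it has at most $\mathfrak{c}$ closed subsets, each carrying at most $\mathfrak{c}^{\aleph_0}=\mathfrak{c}$ continuous maps into a fixed metrizable compactification of $N$, so a fixed $M$ admits at most $\mathfrak{c}$-many Tukey-equivalent targets. Note also that once the sets in your family are pairwise distinct, the invariant $\Theta$ is superfluous: the fibre bound alone does all the work, as your parenthetical ``equivalently'' already concedes.

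The genuine gap is in the construction of the family itself, in the case $\mathfrak{b}=\mathfrak{d}$ --- and this is the central case, not a corner case: under CH one has $\mathfrak{b}=\mathfrak{d}=\mathfrak{c}=\omega_1$ while $2^{\mathfrak{b}}=2^{\omega_1}>\mathfrak{c}$ holds automatically, so the hypothesis of the theorem puts you squarely there. What you defer as ``routine bookkeeping'' --- that every continuous image of every finite power of $M_A$ is non-dominating --- is precisely the nontrivial content of the Bartoszy\'{n}ski--Shelah and Bartoszy\'{n}ski--Tsaban theorems (\cite{BaSh}, \cite{BaTs}) that the paper invokes: strong Mengerness of the scale set $X_B=[\mathbb{N}]^{<\infty}\cup B$ is a theorem, not bookkeeping. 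Your fallback of ``building all the $M_A$ inside a fixed strong Menger set'' does not suffice as stated, because the (strong) Menger property is not hereditary: a continuous map on a power of a subset need not extend to the ambient power, so control of the images of $X^n$ says nothing about the images of $M_A^n$. What makes the paper's version work is that the relevant structure \emph{is} inherited: any size-$\mathfrak{b}$ subset $B'$ of a fixed $\mathfrak{b}$-scale $B$ is again a $\mathfrak{b}$-scale, so each $X_{B'}$ is strong Menger by the same cited theorem, and there are $2^{\mathfrak{b}}$-many such $B'$. No gadget coding is needed at any point; to repair your argument you should replace the unspecified coded sets $M_A$ by the sets $X_{B'}$ (or prove a Bartoszy\'{n}ski--Tsaban-type theorem for your gadgets, which is where all the real work would reappear). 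Your case $\mathfrak{b}<\mathfrak{d}$ is fine, and is in fact trivial: there any $2^{\mathfrak{b}}$ distinct subsets of a fixed size-$\mathfrak{b}$ set are strong Menger by the cardinality argument you give.
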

\begin{proof}
We review a method, see \cite{BaSh}, of constructing non $\sigma$-compact strong Menger sets.
Write $[\mathbb{N}]^{<\infty}$ and $[\mathbb{N}]^\infty$ for the set of finite and, respectively, infinite subsets of $\mathbb{N}$. For $a \in [\mathbb{N}]^\infty$ and $n$ in $\mathbb{N}$, $a(n)$ denotes the $n$th element of $a$ in its increasing enumeration. For $a,b$ in $[\mathbb{N}]^\infty$, $a \le^* b$ means $a(n) \le b(n)$ for all but finitely many $n$. A $\mathfrak{b}$-scale is an unbounded set, $B=\{b_\alpha : \alpha < \mathfrak{b}\}$, in $([\mathbb{N}]^\infty,\le^*)$ such that $\alpha < \beta$ implies $b_\beta \not\le^* b_\alpha$. It is straight forward to see that $\mathfrak{b}$-scales exist in ZFC.
Then $X_B=[\mathbb{N}]^{<\infty} \cup B$, considered as a subspace of $P(\mathbb{N})$ (all subsets of $\mathbb{N}$, identified with the Cantor set, $\{0,1\}^\mathbb{N}$), is a strong Menger set which is not $\sigma$-compact. (Actually \cite{BaSh} showed $X_B$ is a so called Hurewicz set. See \cite{BaTs} for the proof that all finite powers of these sets are Menger.)

Observe that any subset $B'$ of $B$ which has size $\mathfrak{b}$ is also a $\mathfrak{b}$-scale, and so $X_{B'}$ is a strong Menger set. Thus there are at least $2^\mathfrak{b}$-many subsets of the reals that are strong Menger. However we know, see \cite{GM1}, that for any separable metrizable space $M$ the set $\{N \subseteq \mathbb{R} : (\mathcal{F}(M),\K(M)) \te (\mathcal{F}(N),\K(N))\}$ has size $\mathfrak{c}$.
It follows that when $2^\mathfrak{b} > \mathfrak{c}$  there are indeed $2^\mathfrak{b}$-many strong Menger sets as in the statement of the theorem.
\end{proof}

Naturally we would like to remove the hypothesis `$2^\mathfrak{b} > \mathfrak{c}$' from the preceding theorem. It seems plausible that the members of the given $2^\mathfrak{b}$-sized family of separable metrizable strong Menger spaces  have pairwise Tukey inequivalent $(\mathcal{F}(M),\K(M))$ in \textsc{ZFC}.  In general what can we say in \textsc{ZFC} about the number of Tukey classes of pairs $(M,\K(M))$ and $(\mathcal{F}(M),\K(M))$ where $M$ is a separable metrizable (strong) Menger space?
\begin{qu} In \textsc{ZFC}:

Are there at least $2^\mathfrak{b}$-many Tukey inequivalent $(\mathcal{F}(M),\K(M))$ pairs where $M$ is strong Menger? Are there at least $2^\mathfrak{d}$-many Tukey inequivalent $(M,\K(M))$ pairs where $M$ is Menger?

Is $2^\mathfrak{d}$ an upper bound on the number of Tukey inequivalent $(M,\K(M))$ pairs where $M$ is Menger?

Is it consistent that there are strictly fewer, up to Tukey equivalence, pairs  $(\mathcal{F}(M),\K(M))$  where $M$ is strong Menger than $(M,\K(M))$ pairs where $M$ is Menger?
\end{qu}

Our next task is to determine the position of $(\K(\mathbb{Q}),\K( \K(\mathbb{Q})))$, which is Tukey equivalent to $\K(\mathbb{Q})$. First a constraint on pairs $(M,\K(M))$ above $\K(\mathbb{Q})$. Recall that a space is \emph{hereditarily Baire} if every closed subspace satisfies the conclusion of the Baire category theorem. For separable metrizable spaces, being hereditarily Baire is equivalent to not containing a closed copy of $\mathbb{Q}$.
\begin{prop}\label{pr:hb}
    Let $M$ be separable metrizable. If $(M,\K(M)) \tq \K(\mathbb{Q})$ then $M$ is not hereditarily Baire.
\end{prop}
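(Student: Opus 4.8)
The plan is to argue by contraposition: assuming $M$ is hereditarily Baire, I will show $(M,\K(M)) \not\tq \K(\mathbb{Q})$, where $\K(\mathbb{Q})$ abbreviates $(\K(\mathbb{Q}),\K(\mathbb{Q}))$. So suppose instead $(M,\K(M)) \tq (\K(\mathbb{Q}),\K(\mathbb{Q}))$. Apply Theorem~\ref{th:ch_tq} with $N=\mathbb{Q}$, $\mathcal{D}=\K(\mathbb{Q})=\K(N)$ and $\mathcal{C}$ the singletons of $M$, to obtain a compact metrizable $Z$, a closed $D\subseteq\K(M)\times Z$ and a continuous $f:D\to\mathbb{Q}$ witnessing condition (2). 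Restricting to $E:=D\cap(M\times Z)$ (identifying $M$ with the singletons, a closed copy inside $\K(M)$), the projection $\pi:E\to M$ is perfect since $Z$ is compact, and $g:=f|_E:E\to\mathbb{Q}$ is continuous; the covering clause of Theorem~\ref{th:ch_tq}(2) says exactly that the compact sets $g(\pi^{-1}(x))$, $x\in M$, are cofinal in $\K(\mathbb{Q})$. Set $M_0:=\pi(E)$, a closed (hence, under our assumption, hereditarily Baire) subspace of $M$, and define $\Phi:M_0\to\K(\mathbb{Q})$ by $\Phi(x)=g(\pi^{-1}(x))$. As the composite of a perfect preimage with a continuous map, $\Phi$ is an upper semicontinuous compact-valued (usco) map, and its image is cofinal in $\K(\mathbb{Q})$.

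Everything thus reduces to the following assertion: an usco map $\Phi$ from a nonempty hereditarily Baire separable metrizable space $M_0$ into $\K(\mathbb{Q})$ cannot have cofinal image. For each compact $L\subseteq\mathbb{Q}$ put $C_L:=\{x\in M_0 : L\subseteq\Phi(x)\}=\bigcap_{q\in L}\{x:\Phi(x)\cap\{q\}\neq\emptyset\}$. Since $\Phi$ is usco and singletons are closed in $\mathbb{Q}$, each $C_L$ is closed in $M_0$, the assignment $L\mapsto C_L$ is antitone, and cofinality of the image is precisely the statement that every $C_L$ is nonempty. On the other hand, each $\Phi(x)$ is compact while $\mathbb{Q}$ is not, so no $x$ satisfies $\mathbb{Q}\subseteq\Phi(x)$; hence for any increasing sequence of compacta $L_0\subseteq L_1\subseteq\cdots$ with $\bigcup_n L_n=\mathbb{Q}$ the nonempty closed sets $C_{L_n}$ decrease to $\emptyset$. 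In particular $M_0=\bigcup_n A_{q_n}$ with $A_q:=C_{\{q\}}$ closed, so the $C_L$ form a rich antitone family of nonempty closed sets with vanishing ``diagonal'' limits.

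The aim is now to manufacture a closed copy of $\mathbb{Q}$ inside $M_0$, contradicting hereditary Baireness (a separable metrizable space being hereditarily Baire exactly when it contains no closed copy of $\mathbb{Q}$). I would build a Hurewicz-type scheme $(F_s)$ of nonempty closed subsets of $M_0$, indexed by a countable crowded tree, attaching to each node a convergent sequence $L$ in $\mathbb{Q}$ whose successive finite truncations are realized by the sets $C_L$: nonemptiness of every $C_L$ supplies children, while compact-valuedness of $\Phi$ forces each branch intersection to be empty. Collecting the distinguished limit points of the scheme yields a countable crowded subspace of $M_0$, which by Sierpi\'nski's theorem is homeomorphic to $\mathbb{Q}$; engineering the scheme with shrinking diameters and clustering children makes this copy closed in $M_0$, and hence in $M$. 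The proposition then follows by contraposition.

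The main obstacle—and where the genuine work lies—is that cofinality of $\Phi$ is a \emph{global} condition, whereas the scheme requires, at each node, children that are simultaneously nonempty and cluster (in the metric of $M_0$) at a chosen point of the parent. Bridging this global-to-local gap is the crux: here I would exploit the Baire property of $M_0$ together with the usco structure, passing to successively smaller closed subsets of $M_0$ on which a cofinal supply of compacta of $\mathbb{Q}$ remains realizable arbitrarily near a single point, so that the clustering children can actually be found. Once such a localized, crowding-compatible cofinal remnant is secured the scheme is routine, but verifying that this localization can always be achieved (equivalently, that the usco image cannot be cofinal while $M_0$ stays Baire) is the delicate heart of the argument. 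I note that if $M$ were additionally assumed analytic one could instead invoke a Hurewicz-type dichotomy to extract the closed copy of $\mathbb{Q}$ directly, but for arbitrary separable metrizable $M$ the construction above seems to be needed.
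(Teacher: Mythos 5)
Your reduction is correct and in fact coincides with the first half of the paper's own proof: the paper likewise applies Theorem~\ref{th:ch_tq} to get $Z$, $D$ and $f$, restricts to $D'=D\cap(M\times Z)$, and observes that the covering clause makes $f'=f|_{D'}$ compact-covering onto $\mathbb{Q}$; your usco $\Phi(x)=g(\pi^{-1}(x))$ is exactly this data repackaged, and your supporting observations (that $\pi$ is perfect, that $M_0$ is closed in $M$, that each $C_L$ is closed, that no $\Phi(x)$ contains all of $\mathbb{Q}$) are all fine. The problem is everything after that. The assertion to which you reduce --- that an usco map from a nonempty hereditarily Baire separable metrizable space into $\K(\mathbb{Q})$ cannot have cofinal image --- carries the entire content of the proposition, and what you offer for it is a plan, not a proof: you say you ``would build'' a Hurewicz-type scheme, and you yourself flag the global-to-local localization step as unverified (``verifying that this localization can always be achieved \ldots is the delicate heart of the argument''). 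That step is genuinely hard, not routine bookkeeping: already in the special case $M_0=\omega^\omega$ your assertion is a theorem of Christensen (an usco from a Polish space cofinal in $\K(X)$ forces $X$ Polish, and $\mathbb{Q}$ is not Polish), and in the hereditarily Baire generality you need it is essentially equivalent to the Just--Wicke theorem that a compact-covering map onto $\mathbb{Q}$ from a hereditarily Baire metrizable domain is inductively perfect --- a result whose proof requires a delicate tree/game argument that nothing in your sketch supplies. So as written the proposal has a genuine gap at its crux.

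The paper closes this gap by citation rather than construction: since $f'$ is compact-covering onto $\mathbb{Q}$, the Just--Wicke theorem gives a closed $D''\subseteq D'$ on which $f'$ is perfect; $D''$ is $\sigma$-compact (perfect preimage of the $\sigma$-compact $\mathbb{Q}$) and hereditarily Baire (it is closed in $M\times Z$, a perfect preimage of the hereditarily Baire $M$), hence Polish by Hurewicz's theorem, whence its perfect image $\mathbb{Q}$ would be Polish --- a contradiction. Your contrapositive framing is compatible with this: to repair your argument, replace the scheme paragraph either by an appeal to that theorem, or by a complete proof of your localization lemma --- but be aware the latter amounts to reproving Just--Wicke, so the honest assessment is that your writeup establishes the (correct) reduction and then leaves the theorem-strength step undone.
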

\begin{proof}
There are compact metrizable $Z$, closed $D$ in $\K(M) \times Z$ and continuous $f: D \to \mathbb{Q}$ as in Theorem~\ref{th:ch_tq}(2).
Let $D'=D \cap (M \times Z)$ and $f'$ be $f$ restricted to $D'$. Then the covering property on $f$ implies that $f'$ is compact-covering.
Suppose, for a contradiction, that $M$ is hereditarily Baire. Since $f'$ is compact-covering to $\mathbb{Q}$ by \cite{JustWicke} $f'$ is inductively perfect, say when restricted to some closed $D''$.
But now $D''$ is $\sigma$-compact and hereditarily Baire, hence Polish, so its perfect image, $\mathbb{Q}$, is also Polish, which is false.
\end{proof}

Next a characterization of pairs $(M,\K(M))$ Tukey-below $\K(\mathbb{Q})$. Recall that a separable metrizable space that is the continuous image of a Polish space is \emph{analytic}, the complement of an analytic set in a Polish space is \emph{coanalytic}, and continuous images of coanalytic is \emph{$\Sigma_1^2$}.
\begin{prop}
    Let $M$ be separable metrizable. We have $\K(\mathbb{Q}) \tq (M,\K(M))$ if and only if $M$ is $\Sigma^1_2$.
\end{prop}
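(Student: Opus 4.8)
The plan is to prove both implications by transferring between the combinatorics of $\K(\mathbb{Q})$-ordered compact covers and the descriptive complexity of $M$, using the key external fact that $\K(\mathbb{Q})$, realised as the subspace $\{K \in \K([0,1]) : K \subseteq \mathbb{Q}\}$ of the Polish space $\K([0,1])$, is a $\Pi^1_1$ (coanalytic) space, and indeed is $\Pi^1_1$-complete. I would first record this, noting that the Vietoris topology on $\K(\mathbb{Q})$ agrees with the subspace topology inherited from $\K([0,1])$, and citing the standard fact that $\{K : K \subseteq \mathbb{Q}\}$ is coanalytic-complete. I would also observe, for later use, that $\omega^\omega \le_T \K(\mathbb{Q})$: since $\mathbb{Q}$ is not locally compact, Theorem~\ref{th:char_Menger}(1) gives $\K(\mathbb{Q}) \tq \omega^\omega$; combined with $\K(\mathbb{Q} \oplus \mathbb{Q}) \cong \K(\mathbb{Q}) \times \K(\mathbb{Q})$ and $\mathbb{Q} \oplus \mathbb{Q} \cong \mathbb{Q}$, this yields $\K(\mathbb{Q}) \times \omega^\omega \te \K(\mathbb{Q})$.

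For the forward implication (assume $\K(\mathbb{Q}) \tq (M,\K(M))$, show $M$ is $\Sigma^1_2$) I would mimic the proof of Proposition~\ref{pr:hb}. Writing $\K(\mathbb{Q}) \te (\K(\mathbb{Q}),\K(\K(\mathbb{Q})))$ and applying Theorem~\ref{th:ch_tq} produces a compact metrizable $Z$, a closed $D \subseteq \K(\K(\mathbb{Q})) \times Z$ and continuous $f : D \to M$ with the covering property for singletons of $M$. Restricting to the closed copy of $\K(\mathbb{Q})$ sitting inside $\K(\K(\mathbb{Q}))$ as its singletons, set $D' = D \cap (\K(\mathbb{Q}) \times Z)$ and $f' = f|_{D'}$; the covering property forces $f' : D' \to M$ to be surjective. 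Since $Z$ is Polish and $\K(\mathbb{Q})$ is $\Pi^1_1$, the product $\K(\mathbb{Q}) \times Z$ is $\Pi^1_1$ in the Polish space $\K([0,1]) \times Z$, hence the closed subset $D'$ is also $\Pi^1_1$; therefore $M = f'(D')$, being a continuous image of a coanalytic set, is $\Sigma^1_2$.

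For the reverse implication (assume $M$ is $\Sigma^1_2$, build a $\K(\mathbb{Q})$-ordered compact cover) I would first reduce to a convenient source: pick coanalytic $A$ with a continuous surjection onto $M$, and, precomposing with a continuous surjection $\omega^\omega \to P$ from a Polish ambient $P \supseteq A$, arrange that $A \subseteq \omega^\omega$ is coanalytic with $g : A \to M$ a continuous surjection. Using $\Pi^1_1$-completeness of $\K(\mathbb{Q})$, fix a continuous reduction $r : \omega^\omega \to \K([0,1])$ with $A = \{x : r(x) \subseteq \mathbb{Q}\}$. For $(C,h) \in \K(\mathbb{Q}) \times \omega^\omega$ set $K_{C,h} = \{x \in \omega^\omega : r(x) \subseteq C \text{ and } x \le h\}$. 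Each $K_{C,h}$ is the intersection of the closed set $r^{-1}(\K(C))$ with the compact box $\prod_n [0,h(n)]$, hence compact, and lies in $A$; the family is monotone in $(C,h)$ and covers $A$ (given $x \in A$ take $C = r(x) \in \K(\mathbb{Q})$ and $h = x$). Thus $\K(\mathbb{Q}) \times \omega^\omega \tq (A,\K(A))$, and pushing the cover forward through $g$ gives $\K(\mathbb{Q}) \times \omega^\omega \tq (M,\K(M))$; the equivalence $\K(\mathbb{Q}) \times \omega^\omega \te \K(\mathbb{Q})$ noted above then finishes it.

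The main obstacle, and the step that really carries the proof, is the forward direction: an abstract Tukey quotient supplies only a monotone map $\K(\mathbb{Q}) \to \K(M)$ with no definability, so the associated relation $\{(C,x) : x \in K_C\}$ need not be coanalytic and the naive projection argument fails. Theorem~\ref{th:ch_tq} is exactly what repairs this, replacing the arbitrary quotient by a closed $D$ and continuous $f$ so that $M$ genuinely appears as a continuous image of a coanalytic set. A secondary technical point in the reverse direction is the compactness of the pieces $K_{C,h}$: the sets $r^{-1}(\K(C))$ are only closed, not compact, which is why the extra $\omega^\omega$-coordinate, harmlessly absorbed by $\K(\mathbb{Q})$, is needed to cut them down to compact boxes.
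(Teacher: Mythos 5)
Your proof is correct, and its two halves relate differently to the paper's. The forward direction is essentially the paper's argument: the paper applies Theorem~\ref{th:ch_tq} directly to the pair $(\K(\mathbb{Q}),\K(\mathbb{Q}))$, with ambient space $\mathbb{Q}$ itself, to get a closed $D \subseteq \K(\mathbb{Q}) \times Z$ and a continuous surjection $f:D \to M$, then uses coanalyticity of $\K(\mathbb{Q})$ exactly as you do; your detour through $(\K(\mathbb{Q}),\K(\K(\mathbb{Q})))$ and the closed copy of $\K(\mathbb{Q})$ as singletons is valid but unnecessary. The reverse direction is where you genuinely diverge. The paper simply quotes the theorem (from \cite{GM2}) that $\K(\mathbb{Q}) \tq \K(N)$ for every coanalytic $N$, and composes $\K(\mathbb{Q}) \tq \K(N) \tq (N,\K(N)) \tq (M,\K(M))$. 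You instead rebuild the needed quotient from scratch out of the classical (Hurewicz) fact that $\K(\mathbb{Q})$ is $\Pi^1_1$-complete, via the sets $K_{C,h}=r^{-1}(\K(C)) \cap \{x : x \le h\}$; the delicate points are all handled correctly --- $\K(C)$ is closed in $\K([0,1])$ so $r^{-1}(\K(C))$ is closed, the extra $\omega^\omega$-coordinate cuts these closed sets down to compacta, and the factor $\omega^\omega$ is absorbed since $\K(\mathbb{Q}) \te \K(\mathbb{Q})^2 \tq \K(\mathbb{Q}) \times \omega^\omega \tq \K(\mathbb{Q})$, using $\mathbb{Q} \cong \mathbb{Q} \oplus \mathbb{Q}$ and $\K(\mathbb{Q}) \tq \omega^\omega$ from Theorem~\ref{th:char_Menger}(1). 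What each approach buys: the paper's citation is a one-line finish and delivers the stronger, non-relative statement $\K(\mathbb{Q}) \tq \K(N)$, but leans on the machinery of \cite{GM2}; your argument is self-contained modulo the standard completeness fact, and in fact your construction quietly recovers the cited result too, since the $K_{C,h}$ are even cofinal in $\K(A)$ (given compact $L \subseteq A$, take $C = \bigcup r[L]$, which is compact and contained in $\mathbb{Q}$, and $h$ a coordinatewise upper bound of $L$). One cosmetic point: if $\K(A)$ is taken to consist of nonempty compacta, some $K_{C,h}$ may be empty, so replace $K_{C,h}$ by $K_{C,h} \cup \{x_0\}$ for a fixed $x_0 \in A$; nothing else changes.
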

\begin{proof}
If $\K(\mathbb{Q}) \tq (M,\K(M))$ then there are a compact, metrizable $Z$, closed $D \subseteq \K(\mathbb{Q}) \times Z$ and continuous surjection $f: D \to M$.  Since $\K(\mathbb{Q})$ is co-analytic, so is $D$, and hence $M$, as the continuous image of co-analytic, is $\Sigma^1_2$.

Conversely, suppose $M$ is $\Sigma^1_2$. Then there is a co-analytic $N$ such that $M$ is the continuous image of $N$. Hence $(N,K(N)) \tq (<,\K(M))$. But, as $N$ is co-analytic, we know $\K(\mathbb{Q}) \tq \K(N)$. Recalling that $\K(N) \tq (N,\K(N))$ (via the identity map), by transitivity of $\tq$, we are done.
\end{proof}

There are pairs Tukey-incomparable with $\K(\mathbb{Q})$ and Proposition~\ref{pr:hb} does not characterize the pairs not Tukey-above $\K(\mathbb{Q})$.
\begin{ex}
There are separable metrizable spaces $M_1$ and $M_2$ such that both $(M_1,\K(M_1))$ and $(M_2,\K(M_2))$ are
Tukey above $(\omega^\omega,\K(\omega^\omega))$ and Tukey-incomparable with $(\K(\mathbb{Q}),\K(\K(\mathbb{Q})))$, and $M_1$ is hereditarily Baire while $M_2$ is not hereditarily Baire.
\end{ex}
\begin{proof}
Let $M_1$ be a Bernstein set and $M_2=M_1 \oplus \mathbb{Q}$. Note $M_1$ is hereditarily Baire.
As $M_2$ contains a closed copy of $\mathbb{Q}$, it is not hereditarily Baire.
As $M_1$ is not compact, while $\mathbb{Q}$ is  $\sigma$-compact, we see that $(M_2,\K(M_2))\te (M_1,\K(M_1)) \times \omega \te (M_1,\K(M_1))$.
It suffices, then, to show that $(M_1,\K(M_1))$ has the required position in the Tukey order.
As $M_1$ is not Menger,  we have $(\omega^\omega,\K(\omega^\omega)) \le_T (M_1,\K(M_1))$.
As $M_1$ is not $\Sigma^1_2$, we have $(M_1,\K(M_1)) \not\le_T (\K(\mathbb{Q}),\K(\K(\mathbb{Q})))$.
\end{proof}

What remains unclear is whether there are (interesting) pairs $(M,\K(M))$ strictly Tukey-below $\K(\mathbb{Q})$.
\begin{qu}
    Is there in \textsc{ZFC} a separable metrizable space $M$ such that
    \[\omega^\omega \te (\omega^\omega,\K(\omega^\omega)) <_T (M,\K(M)) <_T  \K(\mathbb{Q})?\]
\end{qu}
Such an $M$ is $\Sigma^1_2$ but not analytic. We know \cite{GMZ} under $\mathbb{V}=\mathbb{L}$ there is an example, indeed where $M=\K(N)$. It is consistent \cite{GM2}  that such an $M$ which is also hereditarily Baire does not exist.
Hence the interest is whether there is a ZFC example, or whether consistently no such $M$ exist (for example, because they must be hereditarily Baire).

We have seen that - among separable metrizable spaces - those for which we have $(M,\K(M)) \not\tq \omega^\omega$ are precisely the Menger spaces. We have observed that for Lindel\"{o}f spaces $X$, the Tukey relation $(X,\K(X)) \not\tq \omega^\omega$ implies $X$ is Menger. This raises some intriguing questions.

\begin{qu}
Let $M$ be separable metrizable. Is there a covering property (analogous to that defining Menger space) characterizing when
$(M,\K(M)) \not\tq \K(\mathbb{Q})$? What if we generalize to Lindel\"{o}f spaces?
\end{qu}

Menger spaces have some interesting properties, do they extend to Lindel\"{o}f spaces $X$ such that $(X,\K(X)) \not\tq \K(\mathbb{Q})$? For example, Menger spaces are $D$ \cite{Aur}.
\begin{qu} Let $X$ be Lindel\"{o}f.
If $(X,\K(X)) \not\tq \K(\mathbb{Q})$ then is $X$ a $D$-space? Is $X$ a $D$-space if $(X,\K(X)) \not\tq \K(M)$, for some separable metrizable $M$?
\end{qu}

\subsection{The Cofinal Structure}\label{ss:cof}

It is known \cite[2.10 \& 2.11]{GM1} that for any spaces $X$ and $Y$ we have $\K(X) \tq (\mathcal{F}(X),\K(X)) \tq (X,\K(X))$ and each of `$\K(X) \tq \K(Y)$', `$(\mathcal{F}(X),\K(X)) \tq (\mathcal{F}(Y),\K(Y))$' and `$(X,\K(X)) \tq (Y,\K(Y))$' are equivalent. Hence the cofinal structure of $(M,\K(M)$'s and $(\mathcal{F}(M),\K(M))$'s are the same
as that of $\K(M)$'s, where $M$ is separable metrizable. The reader is referred to \cite{GM1,GM2} for details of the cofinal structure of $\K(M)$'s. But for later use we record the existence of Tukey anti-chain of maximal size which follows from \cite[Theorem 3.11]{GM1}.

\begin{thm}\label{th:2c-antichain}
   There is a $2^\mathfrak{c}$-sized family, $\mathcal{M}$ of separable metrizable spaces such that if $M,N$ are distinct elements of $\mathcal{M}$ then $(M,\K(M)) \not\tq (N,\K(N))$ (and \textit{vice versa}) and  $(\mathcal{F}(M),\K(M)) \not\tq (\mathcal{F}(N),\K(N))$ (and \textit{vice versa}).
\end{thm}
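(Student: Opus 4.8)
The plan is to collapse the two displayed antichain conditions into a single condition about the $\K(M)$'s, and then to import the antichain produced in \cite[Theorem 3.11]{GM1}. Recall the equivalence recorded at the start of this subsection: for separable metrizable $X$ and $Y$ the three relations $\K(X) \tq \K(Y)$, $(\F(X),\K(X)) \tq (\F(Y),\K(Y))$ and $(X,\K(X)) \tq (Y,\K(Y))$ are all equivalent. Consequently, for distinct separable metrizable $M$ and $N$, the statements $\K(M) \not\tq \K(N)$, $(\F(M),\K(M)) \not\tq (\F(N),\K(N))$ and $(M,\K(M)) \not\tq (N,\K(N))$ hold or fail together, and likewise with the roles of $M$ and $N$ reversed. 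Thus it suffices to exhibit a $2^{\mathfrak{c}}$-sized family $\mathcal{M}$ of separable metrizable spaces for which $\K(M) \not\tq \K(N)$ whenever $M,N \in \mathcal{M}$ are distinct; such an $\mathcal{M}$ then witnesses all the required relative incomparabilities simultaneously, and the \textit{vice versa} clauses come for free from the symmetry of the antichain condition on the $\K(M)$'s. Note also that $2^{\mathfrak{c}}$ is the largest conceivable size for such a family, since there are only $2^{\mathfrak{c}}$ separable metrizable spaces up to homeomorphism, so the family we produce is of maximal size.

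For the core assertion -- the existence of a $2^{\mathfrak{c}}$-sized $\tq$-antichain among the $\K(M)$'s -- I would appeal directly to \cite[Theorem 3.11]{GM1}. The construction there proceeds by fixing a sufficiently rich supply of pairwise $\tq$-incomparable ``building block'' separable metrizable spaces, indexing candidate spaces by an independent (equivalently, antichain) family of $2^{\mathfrak{c}}$-many subsets of an index set of size $\mathfrak{c}$, and assembling, for each such subset, a separable metrizable space whose compact cover encodes exactly that subset. The decisive ingredient is a rigidity (or indecomposability) lemma showing that a Tukey quotient $\K(M) \tq \K(N)$ between two assembled spaces forces an inclusion between the underlying index sets; since an independent family admits no nontrivial inclusions between distinct members, no such quotient can exist for distinct $M,N$, which is exactly the antichain property.

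The main obstacle is precisely that rigidity lemma, which is where all the genuine combinatorial work sits; in the present context, however, it is supplied wholesale by \cite[Theorem 3.11]{GM1}, so nothing further need be proved about the $\K(M)$'s themselves. What remains on our side is only the transfer described in the first paragraph, and that is immediate from the equivalence of the three Tukey relations recalled above. Taking $\mathcal{M}$ to be the family furnished by \cite[Theorem 3.11]{GM1} therefore yields a $2^{\mathfrak{c}}$-sized family with $(M,\K(M)) \not\tq (N,\K(N))$ and $(\F(M),\K(M)) \not\tq (\F(N),\K(N))$ (and \textit{vice versa}) for all distinct $M,N \in \mathcal{M}$, as required.
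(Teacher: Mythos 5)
Your proposal is correct and is essentially the paper's own proof: the paper likewise disposes of the theorem by recalling the equivalence (from \cite[2.10 \& 2.11]{GM1}) of the relations $\K(M) \tq \K(N)$, $(\F(M),\K(M)) \tq (\F(N),\K(N))$ and $(M,\K(M)) \tq (N,\K(N))$, so that the cofinal (in particular antichain) structure of the pairs coincides with that of the $\K(M)$'s, and then cites the $2^\mathfrak{c}$-sized Tukey antichain of $\K(M)$'s from \cite[Theorem 3.11]{GM1}. Your middle paragraph sketching the internal mechanism of \cite[Theorem 3.11]{GM1} is not needed (and need not be verified) since that result is imported wholesale, exactly as in the paper.
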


\section{Applications}

\subsection{Diversity of Free Topological Groups and Relatives}

For a space $X$ the \emph{free topological group} of $X$ (respectively,  the \emph{free Abelian topological group} of $X$), denoted $F(X)$ ($A(X)$), is the free group (free Abelian group) on $X$ with the coarsest topological group topology so that for every continuous map $f$ from $X$ into a (commutative) topological group  the canonical extension over the free (Abelian) group is continuous.
Similarly, the \emph{free locally convex topological space} of $X$, denoted $L(X)$, is the vector space on $X$ with the coarsest locally convex topological vector space  topology so that for every continuous map $f$ from $X$ into a locally convex topological vector space  the canonical linear extension over $L(X)$ is continuous. Finally, denote by $L_p(X)$ the vector space as above but only requiring continuous real valued maps on $X$ to have continuous canonical extension.

Here we connect the Tukey structure of compact covers of a space $X$ with those of $F(X), A(X), L(X)$ and $L_p(X)$. To start we only need the following standard fact.

\begin{lem}
Let $X$ be a space. Then for $G$ any of $F$, $A$, $L$ or $L_p$, the space $G(X)$ has two properties: (1) $X$ embeds as a closed subset  and (2) $G(X)$ is a countable union of continuous images of a product of a compact space and a finite power of $X$.
\end{lem}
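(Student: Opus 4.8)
The plan is to derive (1) from the classical closed-embedding theorems and to derive (2) from the canonical \emph{word-length} filtration of each free object, realizing each level of the filtration explicitly as a continuous image of a product of a compact space with a finite power of $X$. Throughout I take $X$ to be Tychonoff, the usual setting in which $F(X)$, $A(X)$, $L(X)$ and $L_p(X)$ are defined. For (1) I would simply invoke the standard theory: it is classical (Graev and Markov in the group cases, and the analogous facts for free locally convex spaces) that for each of $G=F,A,L,L_p$ the canonical map $X \to G(X)$ is a topological embedding onto a closed subspace. No new argument is needed here.

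For (2) in the group cases $G=F$ and $G=A$, every element of $F(X)$ is a reduced word $x_1^{\epsilon_1}\cdots x_n^{\epsilon_n}$ with $x_i \in X$ and $\epsilon_i \in \{-1,1\}$, and every element of $A(X)$ is a sum $\sum_{i=1}^{n}\epsilon_i x_i$; letting $G_n(X)$ be the set of elements of word-length at most $n$, we have $G(X)=\bigcup_n G_n(X)$. I would set $D_n=\{-1,0,1\}^n$, a finite (hence compact) discrete space, and define $\mu_n : X^n \times D_n \to G(X)$ by $\mu_n\big((x_i)_i,(\epsilon_i)_i\big)=x_1^{\epsilon_1}\cdots x_n^{\epsilon_n}$ (respectively $\sum_i \epsilon_i x_i$), reading the exponent $0$ as the identity. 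Since $D_n$ is discrete, $\mu_n$ is continuous once it is continuous on each clopen slice $X^n \times \{\epsilon\}$, where it is a composite of the continuous embedding $X \hookrightarrow G(X)$, inversion, and multiplication. The image of $\mu_n$ is exactly $G_n(X)$, so $G(X)=\bigcup_n \mu_n(X^n \times D_n)$ is a countable union of continuous images of a product of a compact space with a finite power of $X$.

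For (2) in the locally convex cases $G=L$ and $G=L_p$, every element is a finite real combination $\sum_i r_i x_i$, so with $L_n(X)$ denoting those combinations using at most $n$ vectors of $X$ and coefficients in $[-n,n]$ we have $L(X)=\bigcup_n L_n(X)$ (and likewise for $L_p$), since any given element lies in $L_n(X)$ as soon as $n$ dominates both its number of terms and the moduli of its coefficients. The map $\nu_n : X^n \times [-n,n]^n \to G(X)$ given by $\nu_n\big((x_i)_i,(r_i)_i\big)=\sum_i r_i x_i$ is continuous because addition and scalar multiplication in the topological vector space $G(X)$ are continuous and $X \hookrightarrow G(X)$ is continuous; its image is $L_n(X)$. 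Here the compact factor is the cube $[-n,n]^n$ and the finite power is $X^n$, so once more $G(X)$ is a countable union of the required form.

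The only points needing care, and the mild obstacle, are: (a) checking that the filtrations genuinely exhaust $G(X)$, which for $L$ and $L_p$ forces the compact factor to bound coefficient magnitudes as well as the number of terms; and (b) confirming that each level map lands in the claimed form $(\text{compact})\times X^n$ rather than a finite disjoint union of assorted powers of $X$. Absorbing the choice of signs, exponents, or scalars into the compact factor $D_n$ or $[-n,n]^n$ is precisely what achieves the clean product form.
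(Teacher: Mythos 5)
Your proposal is correct and follows essentially the same route as the paper: the paper likewise treats (1) as the classical closed-embedding fact and proves (2) by writing $F(X)$ and $A(X)$ as the countable union over word patterns of continuous images of finite powers of $X$, and $L(X)$, $L_p(X)$ as the union of the sets $\left\{\sum_{i=1}^n \lambda_i x_i : \lambda_i \in [-n,n],\ (x_1,\ldots,x_n) \in X^n\right\}$, which is exactly your $\nu_n$. Your only cosmetic variation is packaging the finitely many sign patterns into the compact discrete factor $D_n=\{-1,0,1\}^n$ so that each level is a single map, rather than a finite union of images of $X^n$; both versions fit the statement's required form.
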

\begin{proof}
For $G$ either $F$ or $A$, observe that $G(X)$ is the (countable) union over all free words of continuous images of finite powers of $X$. While for $G$ either $L$ or $L_p$, note that $G(X)$ is the union of the sets $\{\sum_{i=1}^n \lambda_i x_i : \lambda_1,\ldots,\lambda_n \in [-n,n]$ and $(x_1,\ldots,x_n) \in X^n\}$.
\end{proof}

\begin{prop}\label{pr:k_cov_G} Let  $X$ be a space, and $G(X)$ another space such that (1)~$X$ embeds as a closed subset in $G(X)$ and (2)~$G(X)$ is a countable union of continuous images of a product of a compact space and a finite power of $X$. Then
  $(\mathcal{F}(G(X)), \mathcal{K}(G(X))) \tq (\mathcal{F}(X),\mathcal{K}(X))$ and
    $(\mathcal{F}(X \times \omega ),\mathcal{K}(X
    \times \omega )) \tq  (\mathcal{F}(G(X)), \mathcal{K}(G(X)))$.

    If $X$ is not totally countably compact for finite sets then
    \[(\mathcal{F}(X ),\mathcal{K}(X)) \te  (\mathcal{F}(G(X)), \mathcal{K}(G(X))). \]
In particular, the above Tukey relations hold when $G$ is any of $F$, $A$, $L$ or $L_p$.
\end{prop}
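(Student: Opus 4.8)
The plan is to establish the two Tukey quotients by exhibiting explicit order-preserving maps — one built from property (1), the other from property (2) — and then to upgrade the lower bound to an equivalence by absorbing a spurious ``$\times\,\omega$'' factor under the stated hypothesis. The final ``in particular'' clause requires no new work: the preceding lemma shows each of $F(X), A(X), L(X), L_p(X)$ satisfies (1) and (2), so the proposition applies to them verbatim.

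For $(\F(G(X)),\K(G(X))) \tq (\F(X),\K(X))$ I would use property (1) and the restriction map $\phi(K) = K \cap X$. This is well defined ($K \cap X$ is a closed subset of a compact set), order preserving, and if $\mathcal{C}$ is cofinal for $\F(G(X))$ then, given a finite $F \subseteq X$ — which is also a finite subset of $G(X)$ — some $K \in \mathcal{C}$ contains $F$, whence $\phi(K) = K \cap X \supseteq F$. So $\phi(\mathcal{C})$ is cofinal for $\F(X)$.

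For $(\F(X\times\omega),\K(X\times\omega)) \tq (\F(G(X)),\K(G(X)))$ I would write $G(X) = \bigcup_n h_n(C_n \times X^{k_n})$ as furnished by (2) and let the discrete factor $\omega$ control how many of the pieces are switched on. Given $L \in \K(X\times\omega)$, set $K = \pi_X(L)$ (compact) and $N = \max \pi_\omega(L)$ (finite, since $\pi_\omega(L)$ is a compact subset of $\omega$), and define $\psi(L) = \bigcup_{n \le N} h_n(C_n \times K^{k_n})$, a finite union of compacta. This is order preserving. For cofinality, given a finite $F \subseteq G(X)$, pick for each $w \in F$ a preimage $w = h_{n_w}(c_w, x_w^1, \dots, x_w^{k_{n_w}})$; let $E \subseteq X$ collect the finitely many coordinates $x_w^j$ together with one fixed point (to keep $E$ nonempty), and set $N_0 = \max_w n_w$. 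Feeding the finite set $E \times \{N_0\}$ to a family cofinal for $\F(X\times\omega)$ produces an $L$ with $\pi_X(L) \supseteq E$ and $N \ge N_0$, so $\psi(L) \supseteq \bigcup_{n \le N_0} h_n(C_n \times E^{k_n}) \supseteq F$, as needed.

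Finally, for the equivalence when $X$ is not totally countably compact for finite sets, I would combine the lower bound with Lemma~\ref{l:FK_powers}(1) (and $(\F(\omega),\K(\omega)) \te \omega$), which gives $(\F(X\times\omega),\K(X\times\omega)) \te (\F(X),\K(X)) \times \omega$, so it remains only to prove the absorption $(\F(X),\K(X)) \tq (\F(X),\K(X)) \times \omega$; transitivity then yields $(\F(X),\K(X)) \tq (\F(G(X)),\K(G(X)))$, which with the first quotient gives $\te$. I expect this absorption to be the main obstacle, because a family cofinal for $\F(X)$ need not be cofinal in — hence need not be directed in — $\K(X)$, so one cannot naively pair the identity with a Tukey quotient $\K(X) \to \omega$ and finish by directedness. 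The fix is to route through $[\omega]^{<\omega} \te \omega$ using the witness $(F_n)_n$ supplied by Proposition~\ref{pr:to_ctble}(2): define $\Theta(K) = (K, \{n : F_n \subseteq K\}) \in \K(X) \times [\omega]^{<\omega}$, which is order preserving, and observe that dominating a pair $(F, S)$ with $S = \{m_1, \dots, m_k\}$ finite only requires finding a member of the given cofinal family that contains the single finite set $F \cup F_{m_1} \cup \dots \cup F_{m_k}$ — and such a member exists precisely because the family is cofinal for $\F(X)$. This matches the finite witness to the cofinality hypothesis and sidesteps directedness entirely.
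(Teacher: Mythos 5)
Your proof is correct and takes essentially the same route as the paper's: the restriction map $K \mapsto K \cap X$ for the first quotient, the indexed map built from the countable decomposition in property~(2) for the second, and the witness from Proposition~\ref{pr:to_ctble}(2) to absorb the $\omega$ factor. The differences are only cosmetic refinements --- your union $\bigcup_{n \le N} h_n(C_n \times K^{k_n})$ makes order-preservation transparent where the paper uses the single piece $f_n(L_n \times K^{m_n})$ at $n = \max \pi_2(K')$ (which is order-preserving only if the representation is taken to be nested), and your map $\Theta(K) = \bigl(K, \{n : F_n \subseteq K\}\bigr)$ is exactly the composite of the paper's square equivalence (Lemma~\ref{l:FK_powers}(2)) with its quotient $(\mathcal{F}(X),\mathcal{K}(X)) \tq [\omega]^{<\omega}$, inlined into a single explicit map.
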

\begin{proof} Fix $X$ and $G(X)$. The first Tukey relation is immediate from property (1)  ($K \mapsto K \cap X$ is the desired relative Tukey quotient). We show the second Tukey relation.
Using property (2), write $G(X)=\bigcup_n G_n$ where $G_n=f_n(L_n\times X^{m_n})$, $L_n$ is compact, $m_n$ from $\mathbb{N}$ and $f_n$ is a continuous map from $L_n\times X^{m_n}$ into $G(X)$.
    Define $\phi : \mathcal{K}(X \times \omega) \to \mathcal{K}(G(X))$ by $\phi(K')=f_n(L_n\times K^{m_n})$, where $K=\pi_1(K')$ and $n=\max \pi_2(K')$.
    Note that $\phi$ is well defined and clearly order-preserving.
    To see $\phi (\mathcal{F}(X))$ covers $G(X)$,  take any $g$ in $G(X)$, then $g$ is in some $G_n$, so $g=f_n(\ell,x_1,\ldots,x_{m_n})$, and
    now we see $\phi(F \times \{n\})$ contains $g$ where $F=\{x_1,\ldots,x_{m_n}\}$.

 Now suppose, $X$ is not totally countably compact for finite sets. It remains to show  $(\mathcal{F}(X ),\mathcal{K}(X)) \tq  (\mathcal{F}(G(X)), \mathcal{K}(G(X)))$.
 But by Proposition~\ref{pr:to_ctble} we know $(\mathcal{F}(X)),\mathcal{K}(X)) \tq \omega$ and we compute (applying Lemma~\ref{l:FK_powers} for the first equivalence):
 \begin{align*}
     (\mathcal{F}(X),\mathcal{K}(X)) \te (\mathcal{F}(X),\mathcal{K}(X)) \times (\mathcal{F}(X),\mathcal{K}(X)) \\ \tq (\mathcal{F}(X),\mathcal{K}(X)) \times (\omega,\omega) \te (\mathcal{F}(X \times \omega ),\mathcal{K}(X
    \times \omega )).
    \end{align*}
Now our claim follows from the first part.
\end{proof}

Applying this result to the $2^\mathfrak{c}$-sized anti-chain of Theorem~\ref{th:2c-antichain} we see there is wide variety of free topological groups \textit{et cetera} of separable metrizable spaces.
\begin{ex}
 There is a $2^\mathfrak{c}$-sized family, $\mathcal{M}$ of separable metrizable spaces such that if $M,N$ are distinct elements of $\mathcal{M}$ then:
(1) $G(M)$ does not embed as a closed set in $G(N)$ and
(2) $G(M)$ is not the continuous image of $G(N)$,
for $G$ any of $F$, $A$, $L$ or $L_p$.
\end{ex}
This should be compared with a result from \cite{G-diversity} where it is shown that there is a $2^\mathfrak{c}$-sized family, $\mathcal{A}$ of separable metrizable spaces such that if $M,N$ are distinct elements of $\mathcal{A}$ then:
(1$'$) $A(M)$ does not embed  in $A(N)$ and
(2$'$) $A(M)$ is not the continuous open image of $A(N)$. Here (1$'$) is stronger that (1) above, while (2$'$) is weaker than (2). The results from \cite{G-diversity} give much more information about the topology of the free Abelian topological group, including its character, but they only apply to the free Abelian case.

Applying the proposition to the consistent family of strong Menger sets of Theorem~\ref{th:2b-sm} we obtain a large family of `small' (close to $\sigma$-compact) separable metrizable spaces with diverse free topological algebraic objects.
\begin{ex}
    If $2^\mathfrak{b} > \mathfrak{c}$ then there is a $2^\mathfrak{b}$-sized family, $\mathcal{S}$, of strong Menger sets  such that if $M$ and $N$ are distinct elements of $\mathcal{S}$ then $G(M)$ and $G(N)$ are not homeomorphic,
for $G$ any of $F$, $A$, $L$ or $L_p$.
\end{ex}

Reformulating the proposition above in terms of $P$-ordered covers we immediately deduce the first part of the next lemma. The second part follows from the fact that each of $F(X)$,  $A(X)$, $L(X)$ and $L_p(X)$ contains an infinite closed discrete subset, hence they are not $\omega$-bounded, and the claim follows from Proposition~\ref{pr:to_ctble}. To see that they do all contain an infinite closed discrete set, for $L(X)$ and $L_p(X)$ note they contain closed copies of $\mathbb{R}$, while for $A(X)$ and $F(X)$, apply \cite[Corollary~7.4.3]{AT}.
\begin{lem} \

(1) If a space $X$ has a $P$-ordered compact covering then each of $F(X)$,  $A(X)$, $L(X)$ and $L_p(X)$ has a $P \times \omega$-ordered compact covering.

(2) If any of $F(X)$, $A(X)$, $L(X)$, or $L_p(X)$ has a $Q$-ordered compact cover then $Q \tq \omega$ (so $Q \te Q \times \omega$).
\end{lem}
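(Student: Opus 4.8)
The plan is to prove the two parts of the lemma in sequence, drawing directly on Proposition~\ref{pr:k_cov_G} and Proposition~\ref{pr:to_ctble}. For part (1), I would start by recalling that, by the discussion following Proposition~\ref{pr:to_ctble}, the statement ``$X$ has a $P$-ordered compact cover'' is exactly the relation $P \tq (\F(X),\K(X))$ (using that $P$ is directed, so $P \tq (X,\K(X))$ iff $P \tq (\F(X),\K(X))$). Thus I want to translate the conclusion of Proposition~\ref{pr:k_cov_G} into this language. That proposition gives, for $G$ any of $F$, $A$, $L$, $L_p$, the relation $(\F(X\times\omega),\K(X\times\omega)) \tq (\F(G(X)),\K(G(X)))$. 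By Lemma~\ref{l:FK_powers}(1) together with Lemma~1.2(1) (the equivalence $(\omega,[\omega]^{<\omega}) \te \omega$), we have $(\F(X\times\omega),\K(X\times\omega)) \te (\F(X),\K(X)) \times \omega$. So the hypothesis $P \tq (\F(X),\K(X))$ yields $P \times \omega \tq (\F(X),\K(X)) \times \omega \te (\F(X\times\omega),\K(X\times\omega)) \tq (\F(G(X)),\K(G(X)))$, which is precisely the assertion that $G(X)$ has a $P\times\omega$-ordered compact cover.

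For part (2), the approach is to show that each of $F(X)$, $A(X)$, $L(X)$, $L_p(X)$ contains an infinite closed discrete subset, hence is not $\omega$-bounded, and then invoke Proposition~\ref{pr:to_ctble}(3). The cleanest route: if $G(X)$ had a $Q$-ordered compact cover with $Q \not\tq \omega$, then $Q$ would be countably directed, and by the proof of Proposition~\ref{pr:to_ctble}(3) this would force $G(X)$ to be $\omega$-bounded (every countable subset lies in a single compact member of the cover, hence has compact closure). But an infinite closed discrete set is a countable subset whose closure — itself — is not compact, a contradiction. Therefore $Q \tq \omega$; and since $Q$ is directed and $\omega \tq Q$ is automatic (any infinite increasing sequence in $Q$ exists because $G(X)$ is not compact), one gets $Q \te Q \times \omega$ as claimed.

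The remaining point is supplying the infinite closed discrete subsets, and here I would follow the hint already given in the surrounding text. For $L(X)$ and $L_p(X)$ it suffices to note they contain closed copies of $\mathbb{R}$ (scalar multiples $\lambda x$ of a fixed nonzero $x$), and $\mathbb{R}$ contains the closed discrete set $\mathbb{N}$. For $F(X)$ and $A(X)$ one cites \cite[Corollary~7.4.3]{AT}, which guarantees an infinite closed discrete subset (for instance, words of strictly increasing length based at a fixed point). I expect the only genuine subtlety to be the clean bookkeeping of part (1): one must be careful that the ``$\times\omega$'' factor introduced by passing through $X\times\omega$ in Proposition~\ref{pr:k_cov_G} is faithfully recorded as a ``$P\times\omega$'' in the conclusion, rather than being silently absorbed. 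Part (2) is then essentially a direct reading of Proposition~\ref{pr:to_ctble}(3), with the discreteness facts imported from the literature, so the main obstacle is really just ensuring the reduction to these two earlier propositions is stated transparently.
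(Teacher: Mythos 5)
Your treatment of the two main claims follows the paper's own route exactly. For (1) you reformulate Proposition~\ref{pr:k_cov_G} in terms of ordered covers via $(\F(X\times\omega),\K(X\times\omega)) \te (\F(X),\K(X))\times\omega$ and the product of the quotients $P \tq (\F(X),\K(X))$ and $\omega \tq \omega$; for (2) you use the infinite closed discrete subsets (closed copies of $\mathbb{R}$ in $L(X)$ and $L_p(X)$, and \cite[Corollary~7.4.3]{AT} for $F(X)$ and $A(X)$) to rule out $\omega$-boundedness and then apply Proposition~\ref{pr:to_ctble}(3). This is precisely the paper's argument; note only that you do not need ``the proof of'' Proposition~\ref{pr:to_ctble}(3) --- its statement applies verbatim, since it says $\omega$-boundedness is \emph{equivalent} to the existence of some directed $P$ with $P \tq (X,\K(X))$ and $P \not\tq \omega$, and a $Q$-ordered compact cover with $Q \not\tq \omega$ would be exactly such a witness. (A cosmetic point: if the closed discrete set supplied were uncountable, pass first to a countably infinite subset, which is still closed and discrete, before invoking $\omega$-boundedness.)

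There is, however, one genuine error: your justification of the parenthetical ``$Q \te Q\times\omega$''. In this paper's notation $\omega \tq Q$ means $\omega$ Tukey-quotients \emph{onto} $Q$, i.e.\ $Q$ has a countable cofinal set. That is not automatic, and it is refuted by the example immediately following the lemma: $A(\omega_1)$ has an $(\omega_1\times\omega)$-ordered compact cover, and every cofinal subset of $\omega_1\times\omega$ is uncountable, so $\omega \not\tq \omega_1\times\omega$. An infinite increasing sequence in $Q$ (which does exist) gives an order-preserving map from $\omega$, but its image need not be cofinal, so it witnesses nothing here. Fortunately $\omega \tq Q$ is not needed. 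One direction, $Q\times\omega \tq Q$, is the projection. For the other, use that any directed $Q$ satisfies $Q \te Q\times Q$ (the diagonal map carries cofinal sets to cofinal sets, by directedness); then $Q \tq \omega$, which you have already established, yields $Q \te Q\times Q \tq Q\times\omega$ as the product of the quotients $Q \tq Q$ and $Q \tq \omega$. With that one-line repair your write-up matches the intended proof completely.
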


The following example serves two purposes. It shows the necessity of the $\omega$ factor in the preceding lemma. And it gives an example of a topological group which is not Lindel\"{o}f $\Sigma$ but does have a $\K(M)$-ordered compact cover, where $M$ is separable metrizable (see the next section). (Note it is well known that $\K(\mathbb{Q}) \tq \omega_1 \times \omega$.)
\begin{ex} The space $\omega_1$ has an $\omega_1$-ordered compact cover. Hence $A(\omega_1)$ is a topological group with an $(\omega_1 \times \omega)$-ordered compact cover, and so a $\mathcal{K}(\mathbb{Q})$-ordered compact cover.
 However $A(\omega_1)$ is not Lindel\"{o}f $\Sigma$ and does not have an $\omega_1$-ordered compact cover.
\end{ex}

In our last result on the free topological group and its relatives, and compact covers we investigate invariance. It is almost immediate from Proposition~\ref{pr:k_cov_G} but we have to deal with the potential of extra $\omega$ factors.
\begin{prop}
If $G(X)$ and $G(Y)$ are topologically isomorphic, for $G$ one of $F$, $A$, $L$ or $L_p$, then $(\mathcal{F}(X),\K(X)) \te (\mathcal{F}(Y),\K(Y))$.
\end{prop}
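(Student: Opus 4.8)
The plan is to reduce the statement to a purely order-theoretic fact about the pairs $(\mathcal{F}(X),\K(X))$ and then to cancel an unavoidable ``$\times\,\omega$'' factor. First I would pin down the Tukey type of the compact cover of $G(X)$. Proposition~\ref{pr:k_cov_G} already gives $(\mathcal{F}(G(X)),\K(G(X))) \tq (\mathcal{F}(X),\K(X))$ and $(\mathcal{F}(X\times\omega),\K(X\times\omega)) \tq (\mathcal{F}(G(X)),\K(G(X)))$, and by Lemma~\ref{l:FK_powers} (together with the fact that compact subsets of $\omega$ are finite) the latter source is $(\mathcal{F}(X),\K(X))\times\omega$. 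Since $G(X)$ contains an infinite closed discrete set it is not totally countably compact for finite sets, so Proposition~\ref{pr:to_ctble}(2) gives $(\mathcal{F}(G(X)),\K(G(X))) \tq \omega$. Using the small observation that a pair $R$ with $R\tq A$ and $R\tq B$ also satisfies $R\tq A\times B$ (send $K\mapsto(\phi(K),\psi(K))$ and chase cofinality through directedness), I would combine $(\mathcal{F}(G(X)),\K(G(X)))\tq(\mathcal{F}(X),\K(X))$ with $(\mathcal{F}(G(X)),\K(G(X)))\tq\omega$ to obtain the reverse bound. Together with the displayed lower bound this yields
\[ (\mathcal{F}(G(X)),\K(G(X))) \te (\mathcal{F}(X),\K(X))\times\omega. \]
As $G(X)$ and $G(Y)$ are topologically isomorphic they are homeomorphic, so their compact-cover pairs coincide up to $\te$; writing $A=(\mathcal{F}(X),\K(X))$ and $B=(\mathcal{F}(Y),\K(Y))$ this gives $A\times\omega\te B\times\omega$.

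It remains to cancel the $\omega$. By Proposition~\ref{pr:to_ctble}(2), $A\not\tq\omega$ precisely when $X$ is totally countably compact for finite sets (equivalently, $A$ is countably directed), and similarly for $B$ and $Y$. If \emph{neither} $X$ nor $Y$ is totally countably compact for finite sets, then $A\te A\times\omega$ and $B\te B\times\omega$, whence $A\te A\times\omega\te B\times\omega\te B$. If \emph{both} are, then $A$ and $B$ are countably directed, and here I would exploit the Dedekind completeness of $\K(X)$ and $\K(Y)$ noted in Section~2. From $A\times\omega\tq B\times\omega\tq B$ fix an order-preserving relative quotient $\Phi:\K(X)\times\omega\to\K(Y)$ and define $\phi:\K(X)\to\K(Y)$ by $\phi(K)=\sup_n\Phi(K,n)$; this supremum exists because the countable set $\{\Phi(K,n):n\in\omega\}$ is bounded (countable directedness) and $\K(Y)$ is Dedekind complete. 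Then $\phi$ is order-preserving, and if $C\subseteq\K(X)$ is cofinal for $\mathcal{F}(X)$ then $C\times\omega$ is cofinal for $\mathcal{F}(X)\times\omega$, so $\Phi(C\times\omega)$ is cofinal for $\mathcal{F}(Y)$; as $\phi(K)\ge\Phi(K,n)$ pointwise, $\phi(C)$ is cofinal for $\mathcal{F}(Y)$ as well. Thus $A\tq B$, and by symmetry $A\te B$.

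The main obstacle is the \emph{mixed} case, where exactly one of $X,Y$ — say $X$ — is totally countably compact for finite sets. Order-theoretically $A\times\omega\te B\times\omega$ cannot separate this case from the others (for instance $\mathbf 1\times\omega\te\omega\times\omega$ while $\mathbf 1\not\te\omega$), so the proposition can hold only if the mixed case never occurs, i.e.\ only if being totally countably compact for finite sets is an invariant of $G(X)$. I would establish this invariance using the support (Graev) structure of the free objects: given a sequence $(F_k)$ of finite subsets of $Y$ witnessing failure of the property, regard each point as a word (or combination) in $X$-letters, let $E_k\subseteq X$ collect the finitely many $X$-supports occurring in $F_k$, and apply total countable compactness for finite sets of $X$ to $(E_k)$ to find an infinite $A\subseteq\omega$ with $\overline{\bigcup_{k\in A}E_k}$ compact; landing $\bigcup_{k\in A}F_k$ inside a compact subset of $G(Y)=G(X)$ then contradicts the choice of $(F_k)$. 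The delicate point is that bounded supports do not bound word length (nor, for $L$ and $L_p$, the coefficient level $G_n$), so the argument must in addition pass to an infinite subset on which the relevant $G$-level stays bounded; controlling this via the $k_\omega$/support rigidity of the free constructions is where the real work lies. In the paper's setting of separable metrizable $X$ the difficulty shrinks, since there total countable compactness for finite sets coincides with compactness, so the invariance to be proved is just that $G(X)\cong G(Y)$ preserves compactness of the generating space.
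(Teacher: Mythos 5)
Your reduction of the problem to cancelling an ``$\times\,\omega$'' factor is a reasonable idea, and your first half is sound: the equivalence $(\F(G(X)),\K(G(X))) \te (\F(X),\K(X)) \times \omega$ does follow from Proposition~\ref{pr:k_cov_G}, Proposition~\ref{pr:to_ctble}(2) and Lemma~\ref{l:FK_powers} (the product-combination step works because $\F(G(X))$ is directed). But the proof as a whole has two genuine gaps, and the second is exactly the point where the paper takes a different and decisive turn. First, in the case where both $X$ and $Y$ are totally countably compact for finite sets, your map $\phi(K)=\sup_n \Phi(K,n)$ need not be well defined: Proposition~\ref{pr:to_ctble}(2) gives only the \emph{relative} statement $(\F(Y),\K(Y)) \not\tq [\omega]^{<\omega}$, which is strictly weaker than countable directedness of the directed set $\K(Y)$ itself (note $\K(Y) \tq (\F(Y),\K(Y))$, so the relative non-quotient does not transfer upward). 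What you actually need for the supremum of $\{\Phi(K,n) : n \in \omega\}$ to exist is that every countable family of compact subsets of $Y$ has compact closure of its union, a condition in the vicinity of $\omega$-boundedness (compare Proposition~\ref{pr:to_ctble}(3) and the problem in Section~2 asking whether total countable compactness for finite sets differs from $\omega$-boundedness). So the boundedness claim is unjustified, and the chain $\Phi(K,0) \subseteq \Phi(K,1) \subseteq \cdots$ may be unbounded in $\K(Y)$ for an arbitrary order-preserving quotient $\Phi$.

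Second, and more seriously, you correctly identify that your strategy stands or falls with the mixed case, and you leave precisely that case unproven: your support argument founders, as you say yourself, on the fact that bounded supports do not bound word length, and ``controlling this \ldots is where the real work lies.'' That missing control is exactly what the paper imports from the literature, and it is also why the paper never needs the invariance of total countable compactness for finite sets at all. The paper proves each direction $(\F(X),\K(X)) \tq (\F(Y),\K(Y))$ separately, splitting only on $X$: if $X$ is not totally countably compact for finite sets, Proposition~\ref{pr:k_cov_G} gives $(\F(X),\K(X)) \te (\F(G(X)),\K(G(X)))$ and one restricts to the closed copy of $Y$; if $X$ \emph{is}, then $X$ is pseudocompact, pseudocompactness is $G$-invariant, so $Y$ is pseudocompact, and by Arhangel'skii--Tkachenko \cite[Corollary~7.5.4]{AT} the pseudocompact set $Y$ sits as a closed subset of some $B_n(X)$, the words of reduced length at most $n$ --- this is the word-length bound you are missing. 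Since $B_n(X)$ is a continuous image of a finite sum of finite powers of $X$, one gets $(\F(X),\K(X)) \tq (\F(B_n(X)),\K(B_n(X))) \tq (\F(Y),\K(Y))$ directly, with no cancellation of $\omega$ and no case analysis on $Y$. Note finally that the proposition is stated for arbitrary spaces $X$, so your closing retreat to the separable metrizable case (where the hypothesis reduces to compactness) does not rescue the general statement.
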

\begin{proof}
Suppose $G(X)$ and $G(Y)$ are topologically isomorphic. We show there is a Tukey quotient from $(\mathcal{F}(X),\K(X)$  to $(\mathcal{F}(Y),\K(Y))$. Symmetry gives the full result.

If $X$ is not totally countably compact for finite sets then by Proposition~\ref{pr:k_cov_G}
$(\mathcal{F}(X ),\mathcal{K}(X))$ is Tukey equivalent to  $(\mathcal{F}(G(X)), \mathcal{K}(G(X)))$ and since $Y$ is a closed subset of $G(Y)$, which is homeomorphic to $G(X)$, we see indeed that $(\mathcal{F}(X),\K(X)) \tq (\mathcal{F}(Y),\K(Y))$.

Now suppose $X$ is totally countably compact for finite sets. Then $X$ is pseudocompact.
As pseudocompactness is  $G$-invariant (for each $G$) we see $Y$ is pseudocompact. When $G$ is either $F$ or $A$, by \cite[Corollary~7.5.4]{AT}, $Y$ is contained as a closed subset in some $B_n(X)$, the set of words of reduced length $\le n$.
As $B_n(X)$ is the continuous image of a finite sum of finite powers of $X$
 we see $(\mathcal{F}(X),\K(X)) \tq (\mathcal{F}(B_n(X)),\K(B_n(X)))$. And so, as above, $(\mathcal{F}(X),\K(X)) \tq (\mathcal{F}(Y),\K(Y))$. A minor modification of \cite[Corollary~7.5.4]{AT} gives the result when $G$ is $L$ or $L_p$.
\end{proof}

\subsection{When Topological Groups are CCC}

Since compact groups carry the  Haar measure they are clearly ccc (every pairwise disjoint family of non-empty open sets is countable). Interesting extensions of this result were obtained by Tkachenko \cite{Tk1} who showed that $\sigma$-compact topological groups are ccc, and then Uspenskii who showed Lindel\"{o}f $\Sigma$ groups \cite{Usp1}, are ccc.
Below we prove a result which generalizes those of Tkachenko and Uspenskii, and implies that $\sigma$-pseudocompact groups and groups with a $\K(M)$-ordered compact cover, where $M$ is separable metrizable, are ccc. (Recall Lindel\"{o}f $\Sigma$ spaces have a $\K(M)$-ordered compact cover, where $M$ is separable metrizable.)

A subspace $X$ of a space $Y$ has \emph{relative calibre $(\kappa,\lambda,\mu)$ (in $Y$)} if every family of open sets in $Y$, each meeting $X$, of size $\kappa$ contains a subfamily of size $\lambda$ whose every $\mu$-sized subcollection has non-empty intersection.
A space $X$ has \emph{calibre $(\kappa,\lambda,\mu)$} if it has relative calibre $(\kappa,\lambda,\mu)$ in itself.
%Write `calibre $(\kappa,\lambda)$' for calibre $(\kappa,\lambda,\lambda)$ and `calibre $\kappa$' for calibre $(\kappa,\kappa)$.
Note that a space is ccc  if and only if it has calibre $(\omega_1,2,2)$, and has the Knaster property (\textit{aka} `property K') if it has calibre $(\omega_1,\omega_1,2)$.
Observe that $X$ has calibre $(\kappa,\lambda,\mu)$ if and only if $X$ has the same relative calibre in $Y$ (for any $\kappa, \lambda, \mu$) \emph{provided} either $X$ is a retract of $Y$, or $X$ is dense in $Y$ and $\mu$ is finite.

Define $M_G(X)$ to be the set $\{xy^{-1}z : x,y,z \in X\}$ considered as a subspace of the free topological group on $X$, $F(X)$. Let $q: X^3 \to M_G(X)$ be the natural  map.
Denote by $\mathcal{U}_X$ the universal uniformity on $X$. Set $W(A,B)=q\left( (A \times B) \cup (B \times A)\right)$ where $A \subseteq X$ and $B$ is a symmetric subset of $X^2$ containing the diagonal.
Then in \cite{GRS} it is shown:
\begin{lem}[Claim 10]
For each $x$ in a space $X$, the family of all sets $W(O,U)$, where $O$ is an open neighborhood of $x$ and $U$ is in $\mathcal{U}_X$, is a local base at $x$ in $M_G(X)$.
\end{lem}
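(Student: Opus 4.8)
The plan is to realise the subspace topology on $M_G(X)$ through Graev's extensions of continuous pseudometrics, and then to sandwich the sets $W(O,U)$ between balls in these metrics. Recall first that the universal uniformity $\mathcal{U}_X$ has a base consisting of the entourages $U_d^\epsilon = \{(a,b) : d(a,b) < \epsilon\}$, as $d$ ranges over continuous pseudometrics on $X$ and $\epsilon > 0$; and that, since $\max(d_1,d_2)$ is again a continuous pseudometric, it suffices to prove the claim for the cofinal family of sets $W(B_d(x,\epsilon), U_d^\epsilon)$, where $B_d(x,\epsilon)$ is the open $d$-ball about $x$. For each continuous pseudometric $d$ let $\hat d$ be its Graev extension to $F(X)$, the maximal two-sided invariant pseudometric with $\hat d|_X = d$. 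The key external input I would invoke is that, because $M_G(X) \subseteq B_3(X)$, the family $\{\hat d : d \text{ a continuous pseudometric}\}$ generates the subspace topology, so that $\{B_{\hat d}(x,\epsilon) \cap M_G(X)\}$ is a neighbourhood base at $x$ (see \cite{AT}); monotonicity of the Graev extension, together with $\widehat{\max_i d_i} \ge \hat{d_i}$, lets one pass from finite intersections to single balls.

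Two elementary facts about $\hat d$ drive the easy half. By two-sided invariance and the triangle inequality, for $a,b,c \in X$ one has $\hat d(x, ab^{-1}c) \le \hat d(x,a) + \hat d(a, ab^{-1}c) = d(x,a) + d(b,c)$, and symmetrically $\hat d(x, bc^{-1}a) \le d(x,a) + d(b,c)$. With these I would show that each neighbourhood of $x$ contains some $W$: given a neighbourhood $N$, choose $d$ and $\epsilon$ with $B_{\hat d}(x,\epsilon) \cap M_G(X) \subseteq N$; the displayed inequalities then give $W(B_d(x,\epsilon/2), U_d^{\epsilon/2}) \subseteq B_{\hat d}(x,\epsilon) \cap M_G(X) \subseteq N$. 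That $x \in W(O,U)$ always holds is immediate on taking $a = x \in O$ and the diagonal pair $(x,x) \in U$, since $q(x,x,x) = x$.

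The substantive direction is to show that each $W(B_d(x,r), U_d^r)$ contains a Graev ball, hence is a neighbourhood of $x$. Here I would compute $\hat d(x, g) = \hat d(e, x^{-1}g)$ for $g \in M_G(X)$ using Graev's formula for short words. Since $g \in X X^{-1} X$, its reduced form is either a single generator $p \in X$ or a length-three word $pq^{-1}r$ of signs $(+,-,+)$. In the first case $\hat d(x,g) = d(x,p)$, and if this is $< r$ then the factorisation $g = p\,x^{-1}x$ exhibits $g \in W(B_d(x,r),U_d^r)$. In the second case $x^{-1}g = x^{-1}pq^{-1}r$ is a balanced word whose Graev norm, by the non-crossing matching formula, equals $\min\big(d(x,p) + d(q,r),\, d(x,r) + d(p,q)\big)$. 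If $\hat d(x,g) < r$ then one of the two matchings has cost $< r$, forcing both its summands below $r$, and that matching reads off a witnessing factorisation: the first gives $g = pq^{-1}r$ with $p \in B_d(x,r)$ and $(q,r) \in U_d^r$, placing $g$ in the $q(A \times B)$ half of $W$; the second gives the same $g$ with $r \in B_d(x,r)$ and $(p,q) \in U_d^r$, placing $g = q\big((p,q),r\big)$ in the $q(B \times A)$ half. Thus $B_{\hat d}(x,r) \cap M_G(X) \subseteq W(B_d(x,r), U_d^r)$, as required.

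I expect the main obstacle to be the rigorous justification of the Graev norm computation, specifically the lower bound asserting that no reduction scheme beats the two non-crossing matchings (this is exactly where the non-abelian, rather than free-abelian, structure is used), together with the invoked theorem that Graev extensions generate the subspace topology on the bounded set $B_3(X)$. Both are standard but delicate, and I would cite \cite{AT} for them rather than reprove them, reserving care only for checking that the matching realising the norm genuinely yields a factorisation of $g$ of the precise form demanded by the definition of $W(A,B)$.
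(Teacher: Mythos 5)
First, note that the paper itself does not prove this lemma: it is quoted verbatim as Claim~10 of \cite{GRS}, so there is no internal proof to compare against, and your argument has to be judged on its own. Your computational content checks out. Two-sided invariance of the Graev extension does give $\hat d(x,ab^{-1}c)\le d(x,a)+d(b,c)$ and $\hat d(x,bc^{-1}a)\le d(x,a)+d(b,c)$, so $W(B_d(x,\epsilon/2),U_d^{\epsilon/2})\subseteq B_{\hat d}(x,\epsilon)$. Every element of $M_G(X)$ has odd reduced length, hence length $1$ or $3$, exactly your two cases; for reduced $g=pq^{-1}r$ the word $x^{-1}pq^{-1}r$ has sign pattern $(-,+,-,+)$ and therefore exactly the two non-crossing matchings $\{(1,2),(3,4)\}$ and $\{(1,4),(2,3)\}$ (matchings routed through the identity letter cost at least as much by the triangle inequality in the extended pseudometric, so the Markov-versus-Graev conventions for distances to $e$ are harmless), giving $\hat d(x,g)=\min\bigl(d(x,p)+d(q,r),\,d(x,r)+d(p,q)\bigr)$; your two factorisations then land correctly in the halves $q(A\times B)$ and $q(B\times A)$ of $W(A,B)$, using that $U_d^r$ is symmetric and contains the diagonal. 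The reduction to the cofinal family $W(B_d(x,\epsilon),U_d^\epsilon)$ via $\max(d_1,d_2)$ is likewise fine.

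The genuine gap is the provenance of your ``key external input.'' Observe that, given your own two inclusions, the statement that the Graev balls $B_{\hat d}(x,\epsilon)\cap M_G(X)$ form a neighbourhood base at $x$ is formally \emph{equivalent} to the lemma being proved: one direction of the lemma (each $W(O,U)$ is a neighbourhood of $x$) follows from continuity of $\hat d$ on $F(X)$ plus your norm computation, but the other direction (every neighbourhood of $x$ contains some $W(O,U)$) is carried entirely by the external input, so your proof is exactly as strong as that citation. And the citation as given is doubtful: Section~7.2 of \cite{AT} shows that the Graev extension of a continuous pseudometric is a continuous invariant pseudometric on $F(X)$ --- which is what legitimizes your easy direction --- but, to my knowledge, it does not prove that the family $\{\hat d\}$ \emph{generates} the subspace topology on the bounded parts $B_n(X)$ for arbitrary Tychonoff $X$. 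That finer statement is Uspenskii's theorem (proved by him for metrizable $X$, with the pseudometric version for general Tychonoff spaces treated in Sipacheva's survey of the topology of free topological groups), and it is precisely the point where the original proof in \cite{GRS} has to engage the actual free-group topology. Until you either quote that theorem from a source that really contains it, or prove the $B_3(X)$ case directly (e.g.\ from Sipacheva's description of neighbourhoods of the identity), the hard direction of the lemma remains unestablished; with the corrected attribution in hand, the rest of your argument is complete and correct.
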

Recall that for any cover $\mathcal{U}$ of a space, and any subset $O$, the \emph{star of $O$ in $\mathcal{U}$} is $\mathop{st}(O,\mathcal{U})=\bigcup \{ U \in \mathcal{U} : O \cap U \ne \emptyset\}$.
Another open cover $\mathcal{V}$ \emph{star refines} $\mathcal{U}$ if the collection of stars, $\{ \mathop{st}(V,\mathcal{V}) : V \in \mathcal{V}\}$, refines $\mathcal{U}$.
Then an open cover $\mathcal{U}$ is \emph{normal} if there is a sequence of covers, $(\mathcal{V}_n)_n$, such that $\mathcal{V}_0=\mathcal{U}$ and for every $n$ we have that $\mathcal{V}_{n+1}$ star refines $\mathcal{V}_n$.
Equivalently, (\cite{Tukey} and see \cite[5.4H(c)]{Eng}), and more usefully here, an open cover is normal if it has a locally finite open refinement by cozero sets.
Recall that the collection $\{U(\mathcal{U}) : \mathcal{U}$ is an open normal cover of $X\}$ is a base of the universal uniformity, $\mathcal{U}_X$, where $U(\mathcal{U})=\bigcup \{ V \times V : V \in \mathcal{U}\}$.
Combining this with the observation that $W(O_1,U(\mathcal{U}_1)) \cap W(O_2,U(\mathcal{U}_2)) \ne \emptyset$ if and only if $\mathop{st}(O_1,\mathcal{U}_2) \cap \mathop{st}(O_2,\mathcal{U}_1) \ne \emptyset$, we deduce:
\begin{lem} Let $X$ be a space. Then the following are equivalent:

(1) $(TG_{\kappa,\lambda,2})$ for any families $\{O_\alpha : \alpha < \kappa\}$ of non-empty open sets and $\{ \mathcal{U}_\alpha : \alpha < \kappa\}$ of open normal covers, there is a subset $A$ of $\kappa$ of size $\lambda$ such that $\mathop{st}(O_\alpha,\mathcal{U}_\beta) \cap \mathop{st}(O_\beta,\mathcal{U}_\alpha) \ne \emptyset$ for any $\alpha,\beta \in A$;

(2) for any families $\{O_\alpha : \alpha < \kappa\}$ of non-empty open sets and $\{ U_\alpha : \alpha < \kappa\}\subseteq \mathcal{U}_X$, there is a subset $A$ of $\kappa$ of size $\lambda$ such that $W(O_\alpha,U_\alpha) \cap W(O_\beta,U_\beta) \ne \emptyset$ for any $\alpha,\beta \in A$;

(3) $X$ is relative calibre $(\kappa,\lambda,2)$ in $M_G(X)$.
\end{lem}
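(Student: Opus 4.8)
The plan is to prove the cycle $(1) \Leftrightarrow (2) \Leftrightarrow (3)$ using the three facts assembled just before the statement: the local-base description of $M_G(X)$ (Claim 10), that $\{U(\mathcal{U}) : \mathcal{U}$ an open normal cover$\}$ is a base for the universal uniformity $\mathcal{U}_X$, and the set-theoretic observation that $W(O_1,U(\mathcal{U}_1)) \cap W(O_2,U(\mathcal{U}_2)) \ne \emptyset$ if and only if $\mathop{st}(O_1,\mathcal{U}_2) \cap \mathop{st}(O_2,\mathcal{U}_1) \ne \emptyset$. The only structural feature of $W$ I will need throughout is that $W(A,B)$ is monotone increasing in each of its two arguments.

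For $(1) \Rightarrow (2)$: given nonempty open sets $\{O_\alpha : \alpha<\kappa\}$ and arbitrary entourages $\{U_\alpha : \alpha < \kappa\} \subseteq \mathcal{U}_X$, I would first use the base property to pick, for each $\alpha$, an open normal cover $\mathcal{U}_\alpha$ with $U(\mathcal{U}_\alpha) \subseteq U_\alpha$. Applying $(TG_{\kappa,\lambda,2})$ to the families $\{O_\alpha\}$ and $\{\mathcal{U}_\alpha\}$ produces $A$ of size $\lambda$ with $\mathop{st}(O_\alpha,\mathcal{U}_\beta) \cap \mathop{st}(O_\beta,\mathcal{U}_\alpha) \ne \emptyset$ for all $\alpha,\beta \in A$, which by the observation is precisely $W(O_\alpha,U(\mathcal{U}_\alpha)) \cap W(O_\beta,U(\mathcal{U}_\beta)) \ne \emptyset$; since $U(\mathcal{U}_\gamma) \subseteq U_\gamma$, monotonicity of $W$ in its second coordinate enlarges these sets to $W(O_\gamma,U_\gamma)$ and the intersections persist, giving $(2)$. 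The converse $(2) \Rightarrow (1)$ is the same argument run backwards: from open normal covers $\mathcal{U}_\alpha$ set $U_\alpha = U(\mathcal{U}_\alpha) \in \mathcal{U}_X$, invoke $(2)$, and read the resulting $W$-intersections back through the observation to recover the star condition of $(1)$; here no monotonicity step is needed.

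For $(2) \Leftrightarrow (3)$ I would translate between arbitrary open sets of $M_G(X)$ meeting $X$ and the basic sets $W(O,U)$ via Claim 10. For $(2) \Rightarrow (3)$, take a $\kappa$-sized family $\{G_\alpha\}$ of open sets of $M_G(X)$ each meeting $X$; choose $x_\alpha \in G_\alpha \cap X$ and, as $G_\alpha$ is then a neighbourhood of $x_\alpha$, use Claim 10 to find $W(O_\alpha,U_\alpha) \subseteq G_\alpha$ with $O_\alpha$ an open neighbourhood of $x_\alpha$ in $X$ and $U_\alpha \in \mathcal{U}_X$. Feeding $\{O_\alpha\}$ and $\{U_\alpha\}$ into $(2)$ returns $A$ of size $\lambda$ along which the $W(O_\alpha,U_\alpha)$ pairwise intersect, hence so do the larger $G_\alpha$; since the parameter $\mu=2$ asks exactly that every two-element subfamily meet, this is relative calibre $(\kappa,\lambda,2)$ of $X$ in $M_G(X)$. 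For $(3) \Rightarrow (2)$, given $\{O_\alpha\}$ and $\{U_\alpha\}$, each $W(O_\alpha,U_\alpha)$ is by Claim 10 a neighbourhood of a point $x_\alpha \in O_\alpha \subseteq X$, so $\mathrm{int}\, W(O_\alpha,U_\alpha)$ is a nonempty open subset of $M_G(X)$ meeting $X$; relative calibre applied to this family yields $A$ of size $\lambda$ with pairwise intersecting interiors, and a fortiori with pairwise intersecting $W(O_\alpha,U_\alpha)$, which is $(2)$.

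With the preliminary lemmas in hand the argument is essentially bookkeeping, and I do not expect a genuine obstacle. The points that most need care are the direction of the inclusions in $(1)\Leftrightarrow(2)$ — one must choose the normal-cover entourage $U(\mathcal{U}_\alpha)$ below the given $U_\alpha$ so that monotonicity of $W$ runs the right way — and, in $(2)\Leftrightarrow(3)$, the open-versus-neighbourhood distinction for the sets $W(O,U)$ (handled by passing to interiors) together with the observation that $\mu = 2$ in the definition of relative calibre corresponds exactly to the pairwise-intersection conclusions delivered by $(2)$. One should also note that the choice $x_\alpha \in G_\alpha \cap X$ is legitimate precisely because each $G_\alpha$ is assumed to meet $X$.
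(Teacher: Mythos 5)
Your proof is correct and takes exactly the route the paper intends: the paper states this lemma as an immediate deduction from Claim~10, the fact that the sets $U(\mathcal{U})$ for $\mathcal{U}$ an open normal cover form a base of $\mathcal{U}_X$, and the observation that $W(O_1,U(\mathcal{U}_1)) \cap W(O_2,U(\mathcal{U}_2)) \ne \emptyset$ if and only if $\mathop{st}(O_1,\mathcal{U}_2) \cap \mathop{st}(O_2,\mathcal{U}_1) \ne \emptyset$, and your write-up merely makes that bookkeeping explicit. The two points you flag --- monotonicity of $W$ in its second argument when replacing $U(\mathcal{U}_\alpha)$ by the larger $U_\alpha$, and passing to interiors since Claim~10 gives neighbourhoods rather than open sets --- are precisely the details the paper silently elides, and you handle them correctly.
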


A space is \emph{retral} if it can be embedded as a retract in a topological group. Obviously every topological group is retral.
Every retral space $X$ is a retract of $M_G(X)$ and conversely (see \cite{GRS}). Combining the above we deduce:

\begin{thm}
    A retral space $X$ has calibre $(\kappa,\lambda,2)$ if and only if it has $(TG_{\kappa,\lambda,2})$.
\end{thm}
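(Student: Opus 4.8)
The plan is to read the statement purely as a concatenation of the three facts assembled just before it, so that the proof reduces to composing two equivalences. First I would note that, since $X$ is retral, the cited characterization from \cite{GRS} gives that $X$ is a retract of $M_G(X)$. This is the only structural input beyond the two general lemmas, and it is exactly what licenses the reduction below.

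Next I would invoke the observation on relative calibres. Because $X$ is a retract of the ambient space $Y = M_G(X)$, that observation tells us $X$ has calibre $(\kappa,\lambda,2)$ if and only if it has the same relative calibre $(\kappa,\lambda,2)$ in $M_G(X)$. The point worth checking carefully here is which clause of the observation is being used: the retract clause imposes no restriction on the third parameter, so we may take $\mu = 2$ without appealing to the separate (dense, $\mu$ finite) clause. This is the step that transfers an intrinsic calibre condition on $X$ into a relative condition inside its group hull, and it is really the conceptual heart of the argument.

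Finally I would apply the equivalence of conditions (1) and (3) in the preceding lemma, namely that $X$ is relative calibre $(\kappa,\lambda,2)$ in $M_G(X)$ if and only if $X$ has $(TG_{\kappa,\lambda,2})$. Chaining the two equivalences then yields calibre $(\kappa,\lambda,2) \iff (TG_{\kappa,\lambda,2})$, which is the claim. I do not expect a genuine obstacle, since each ingredient is already in place; the only care needed is bookkeeping, to confirm that the retract fact, the relative-calibre observation, and the lemma are all phrased with respect to the same ambient space $M_G(X)$, so that the two reductions compose without a mismatch.
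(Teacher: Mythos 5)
Your proposal is correct and is precisely the argument the paper intends: the theorem is stated with no written proof beyond ``Combining the above we deduce,'' and the intended combination is exactly your chain --- retral implies $X$ is a retract of $M_G(X)$ (from \cite{GRS}), the retract clause of the relative-calibre observation (valid for arbitrary $\mu$, hence $\mu=2$), and the equivalence of (1) and (3) in the preceding lemma. Your explicit check that the retract clause, not the dense-plus-finite-$\mu$ clause, is the one in play is a sound piece of bookkeeping and matches the paper's setup.
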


A collection $\mathcal{N}$ of subsets of a space $X$ is a \emph{network} for another collection, $\mathcal{C}$ say,  if whenever some $C$ in $\mathcal{C}$ is contained in an open set $U$, there is an element $N$ from $\mathcal{N}$ such that $C \subseteq N \subseteq U$.
 Then a space is $\Sigma(\aleph_0)$ if it has a cover by countably compact sets and a countable network for this cover.
Let us say that a space is $\Sigma^-(\aleph_0)$ if it has a cover by pseudocompact sets and a countable network for this cover.
Observe that a locally finite open cover of a pseudocompact (hence also, countably compact) space is finite.
In the next result we combine these observations with the ideas behind the proof of Tkachenko's theorem.

\begin{thm}
Let $X$ be a $\Sigma^-(\aleph_0)$ retral space. Then $X$ has calibre $(\omega_1,\omega_1,2)$.
\end{thm}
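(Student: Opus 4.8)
The plan is to use the characterization available for retral spaces: since $X$ is retral, it has calibre $(\omega_1,\omega_1,2)$ as soon as it satisfies property $(TG_{\omega_1,\omega_1,2})$, so I would reduce to verifying the latter. Fix families $\{O_\alpha : \alpha<\omega_1\}$ of nonempty open sets and $\{\mathcal{U}_\alpha : \alpha<\omega_1\}$ of open normal covers, and for each $\alpha$ pick $x_\alpha\in O_\alpha$. The backbone of the argument is a clean sufficient condition for a single pair $\alpha,\beta$: if some $U\in\mathcal{U}_\alpha$ meets both $O_\alpha$ and $O_\beta$, then $\mathop{st}(O_\alpha,\mathcal{U}_\beta)\cap\mathop{st}(O_\beta,\mathcal{U}_\alpha)\neq\emptyset$. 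Indeed, any $z\in U\cap O_\alpha$ lies in $\mathop{st}(O_\beta,\mathcal{U}_\alpha)$ (because $U$ already meets $O_\beta$), while, as $\mathcal{U}_\beta$ covers the point $z\in O_\alpha$, it also lies in $\mathop{st}(O_\alpha,\mathcal{U}_\beta)$. In particular it suffices to arrange $x_\beta\in\mathop{st}(x_\alpha,\mathcal{U}_\alpha)$, since then a common member of $\mathcal{U}_\alpha$ catches both $O_\alpha$ and $O_\beta$ through $x_\alpha$ and $x_\beta$.

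Next I would bring in the $\Sigma^-(\aleph_0)$ structure: fix a cover of $X$ by pseudocompact sets with countable network $\mathcal{N}$, and for each $\alpha$ choose a pseudocompact cover member $P_\alpha\ni x_\alpha$. By normality $\mathcal{U}_\alpha$ has a locally finite cozero refinement $\mathcal{W}_\alpha$; since $P_\alpha$ is pseudocompact, only finitely many members of $\mathcal{W}_\alpha$ can meet $P_\alpha$ (a locally finite family of nonempty relatively open sets in a pseudocompact space is finite), and these sit inside finitely many $U^\alpha_1,\dots,U^\alpha_{k_\alpha}\in\mathcal{U}_\alpha$ that cover $P_\alpha$. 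Put $V_\alpha=\bigcup_i U^\alpha_i\supseteq P_\alpha$ and use the network to pick $N_\alpha\in\mathcal{N}$ with $P_\alpha\subseteq N_\alpha\subseteq V_\alpha$. As $\mathcal{N}$ is countable, pigeonholing produces an uncountable $A\subseteq\omega_1$ on which $N_\alpha$ equals a fixed $N$. The payoff is a compactness-like clustering: for all $\alpha,\beta\in A$ we have $x_\beta\in N\subseteq V_\alpha$, so every chosen point falls inside the finite family of pieces attached to \emph{every} index of $A$.

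The remaining, and hardest, step is to upgrade this to the sufficient condition above for uncountably many pairs, that is, to pass from ``$x_\beta$ lies in some $U^\alpha_i$, which meets $P_\alpha$'' to ``$x_\beta\in\mathop{st}(x_\alpha,\mathcal{U}_\alpha)$''. This is exactly where the obstacle lies: the network only traps the possibly large set $P_\alpha$, so the piece catching $x_\beta$ need not see $x_\alpha$ or $O_\alpha$. My plan to overcome this is to localize using a compactification. Since $X$ is retral it is Tychonoff, so I would pass to the Stone--Cech compactification $\beta X$, where the cozero members of each $\mathcal{W}_\alpha$ extend to open sets and the chosen points must cluster; extract a cluster point $p$ of $\{x_\alpha:\alpha\in A\}$ and thin $A$ so the points accumulate at $p$. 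Because near $p$ only finitely many extended pieces are relevant for each index, a further pigeonhole on ``which piece contains $p$'', combined with a free-sequence or $\Delta$-system style selection, should force uncountably many of the $x_\alpha$ into a common cover member and yield an uncountable $B\subseteq A$ that is pairwise compatible in the sense of the first paragraph.

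As a guide and sanity check, the purely pseudocompact case is transparent: with the trivial cover $\{X\}$ and network $\{X\}$ each $V_\alpha$ is all of $X$, and the whole difficulty collapses to the density of $X$ in its compact closure together with the finiteness of the refinements---precisely the localization the general argument must reproduce. Once the uncountable pairwise-compatible $B$ is produced, the first-paragraph reduction gives $\mathop{st}(O_\alpha,\mathcal{U}_\beta)\cap\mathop{st}(O_\beta,\mathcal{U}_\alpha)\neq\emptyset$ for all $\alpha,\beta\in B$, verifying $(TG_{\omega_1,\omega_1,2})$ and hence, by the retral characterization, that $X$ has calibre $(\omega_1,\omega_1,2)$.
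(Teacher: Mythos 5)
Your first two-thirds track the paper's proof exactly: the reduction to $(TG_{\omega_1,\omega_1,2})$ via the retral characterization, passing to locally finite refinements by normality, using pseudocompactness of the cover pieces to extract finite subfamilies $\mathcal{V}_\alpha\subseteq\mathcal{U}_\alpha$ covering $P_\alpha$, and pigeonholing the countable network to fix a single $N$ with $\{x_\alpha:\alpha\in A\}\subseteq N\subseteq V_\alpha$ for all $\alpha\in A$. But the final step --- which you correctly identify as the hardest --- is where your proposal genuinely fails, and the Stone--\v{C}ech localization plan cannot be repaired as stated. First, ``near $p$ only finitely many extended pieces are relevant for each index'' is unjustified: local finiteness in $X$ does not persist at points of $\beta X\setminus X$ (the cover of $\omega$ by singletons is locally finite in $\omega$, yet every neighborhood of a point of $\beta\omega\setminus\omega$ meets infinitely many of them), and $p$ will typically lie outside $X$. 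Second, the pigeonhole on ``which piece contains $p$'' runs over finite families $\{U^\alpha_1,\dots,U^\alpha_{k_\alpha}\}$ that \emph{vary with} $\alpha$, so it selects an index $i(\alpha)$ per $\alpha$ but no single set; ``uncountably many $x_\alpha$ in a common cover member'' is not even a coherent target, since there is no common cover --- what is needed is the symmetric pairwise condition $\mathop{st}(x_\alpha,\mathcal{V}_\beta)\cap\mathop{st}(x_\beta,\mathcal{V}_\alpha)\ne\emptyset$, and a one-sided cluster-point selection gives no control in both directions simultaneously. Your sanity check is also misleading: for pseudocompact $X$ the difficulty does not ``collapse to density in the compactification'' --- that case is essentially Tkachenko's original theorem, and its proof already requires the full combinatorial argument you are missing.

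What the paper does at this juncture is Ramsey-theoretic, not topological. After your pigeonhole on $N$, also uniformize so that $\lvert\mathcal{V}_\alpha\rvert\le k$ for a fixed $k$ (a step your write-up omits but which is essential below). Color a pair $\{\alpha,\beta\}\in[A']^2$ with $0$ if $\mathop{st}(x_\alpha,\mathcal{V}_\beta)\cap\mathop{st}(x_\beta,\mathcal{V}_\alpha)\ne\emptyset$ and $1$ otherwise. An uncountable $0$-homogeneous set finishes the proof, since $\mathcal{V}_\gamma\subseteq\mathcal{U}_\gamma$. If none exists, the Erd\H{o}s--Dushnik--Miller theorem $\omega_1\to(\omega_1,\omega)^2$ yields an infinite $1$-homogeneous set; but this contradicts a combinatorial lemma of Tkachenko: for any set $Y$, any sequence $(\mathcal{V}_\alpha)_{\alpha<\omega}$ of covers of $Y$ with $\lvert\mathcal{V}_\alpha\rvert\le k$, and any points $x_\alpha\in Y$, there is an infinite $B$ with $\mathop{st}(x_\alpha,\mathcal{V}_\beta)\cap\mathop{st}(x_\beta,\mathcal{V}_\alpha)\ne\emptyset$ for all $\alpha,\beta\in B$. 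Here the fixed network element $N$ plays precisely the role of the common set $Y$: each $\mathcal{V}_\alpha$ covers $N$ because $N\subseteq V_\alpha$, and all the chosen points lie in $N$. So the missing idea is to aim not at forcing points into common pieces but at refuting the existence of an infinite pairwise-incompatible set; without this (or an equivalent partition argument) your outline does not close.
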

\begin{proof}
Fix the cover $\mathcal{K}$ of $X$ by pseudocompact sets, and countable family $\mathcal{N}$ such that whenever $K \subseteq U$, where $K \in \mathcal{K}$ and $U$ is open, there is an $N \in \mathcal{N}$ such that $K \subseteq N \subseteq U$.

It suffices to check the condition $(TG_{\omega_1,\omega_1,2})$ above. Fix a family of open sets $\{O_\alpha : \alpha < \omega_1\}$ and normal open covers $\{\mathcal{U}_\alpha : \alpha < \omega_1\}$.
Observe that $(TG_{\kappa,\lambda,2})$ holds if it is true when we replace each $\mathcal{U}_\alpha$ by any open refinement. So, by definition of `normal' cover, we may assume that every $\mathcal{U}_\alpha$ is locally finite.
Picking a point $x_\alpha$ in $O_\alpha$, it suffices to show there is a $\omega_1$-sized subset $A$ of $\omega_1$ such for any distinct $\alpha, \beta$ from $A$ we have  $\mathop{st}(x_\alpha, \mathcal{U}_\beta) \cap \mathop{st}(x_\beta, \mathcal{U}_\alpha) \ne \emptyset$.

For each $x_\alpha$ pick $K_\alpha$ from $\mathcal{K}$ containing $x_\alpha$.
As $\mathcal{U}_\alpha$ is locally finite and $K_\alpha$ is pseudocompact pick a finite subcollection $\mathcal{V}_\alpha$ of $\mathcal{U}_\alpha$ covering $K_\alpha$. Let $V_\alpha=\bigcup \mathcal{V}_\alpha$.
Pick $N_\alpha$ in $\mathcal{N}$ such that $K_\alpha \subseteq N_\alpha \subseteq V_\alpha$.

As $\mathcal{N}$ is countable there is an uncountable $A' \subseteq \omega_1$ such that $N_\alpha=N$ for all $\alpha \in A'$. Note $\{x_\alpha : \alpha \in A'\}$ is contained in $N$.
Passing to an uncountable subset we can suppose that all $\mathcal{V}_\alpha$ have size $\le k$, for some fixed $k$.
Color the pairs $[A']^2$ so that $\{\alpha,\beta\}$ has color $0$ if $\mathop{st}(x_\alpha, \mathcal{V}_\beta) \cap \mathop{st}(x_\beta, \mathcal{V}_\alpha) \ne \emptyset$ and $1$ otherwise.
If there is an uncountable subset $A$ of $A'$ whose pairs are all colored $0$ then we are done: for all distinct $\alpha,\beta$ in $A$
we have that $\mathop{st}(x_\alpha, \mathcal{U}_\beta) \cap \mathop{st}(x_\beta, \mathcal{U}_\alpha)$ contains $\mathop{st}(x_\alpha, \mathcal{V}_\beta) \cap \mathop{st}(x_\beta, \mathcal{V}_\alpha)$ which is non-empty.

By Erd\"{o}s-Dushnik-Miller, if there is no uncountable $0$-homogeneous set then there must be an infinite $1$-homogeneous set.
However Tkachenko proved: for any set  $Y$, and  $\{\mathcal{V}_\alpha : \alpha < \omega\}$ a family of open covers of $Y$ such that each $\vert \mathcal{V}_\alpha \vert \leq k$ ($k$ fixed) and $\{ x_\alpha : \alpha < \omega\} \subseteq Y$
there is an infinite $B \subseteq \omega$ such that any distinct $\alpha, \beta \in B$ satisfy $\mathop{st}(x_\alpha, \mathcal{V}_\beta) \cap \mathop{st}(x_\beta, \mathcal{V}_\alpha) \ne \emptyset$.
Taking $Y=N$ and $\mathcal{V}_\alpha$ and $x_\alpha$ for $\alpha$ from an infinite $1$-homogeneous set gives a contradiction.
\end{proof}

Since $\sigma$-pseudocompact spaces (ones that are a countable union of  pseudocompact subspaces)  are $\Sigma^-(\aleph_0)$ we deduce:
\begin{cor}\label{c:spscpt}
Every $\sigma$-pseudocompact group is  calibre $(\omega_1,\omega_1,2)$, and hence is ccc.
\end{cor}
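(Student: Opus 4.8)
The plan is simply to verify that a $\sigma$-pseudocompact group meets the two hypotheses of the preceding theorem---being retral and being $\Sigma^-(\aleph_0)$---and then to read off ccc from the resulting calibre condition. Since essentially all of the work has already been carried out in that theorem (and, underneath it, in Tkachenko's combinatorial lemma on stars), the corollary amounts to checking definitions.

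First, any topological group $G$ is retral, as it embeds as a retract in a topological group, namely itself; so this hypothesis comes for free. Next I would check that a $\sigma$-pseudocompact group is $\Sigma^-(\aleph_0)$. Write $G = \bigcup_{n} P_n$ where each $P_n$ is pseudocompact. Then $\{P_n : n \in \omega\}$ is a cover of $G$ by pseudocompact sets, and---crucially, because this cover is already countable---the very same family $\mathcal{N} = \{P_n : n \in \omega\}$ serves as a countable network for it: if some $P_n$ is contained in an open set $U$, take $N = P_n$, so that $P_n \subseteq N \subseteq U$. Hence $G$ is $\Sigma^-(\aleph_0)$.

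Applying the preceding theorem now yields that $G$ has calibre $(\omega_1,\omega_1,2)$, that is, the Knaster property. Finally, to pass from Knaster to ccc, I would observe that calibre $(\omega_1,\omega_1,2)$ trivially entails calibre $(\omega_1,2,2)$: given any $\omega_1$-sized family of non-empty open sets, the theorem supplies an uncountable (in particular, at least two-element) subfamily any two members of which meet, and two such members already witness calibre $(\omega_1,2,2)$. Recalling the earlier observation that ccc is precisely calibre $(\omega_1,2,2)$, we are done.

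I do not expect any genuine obstacle here; the entire difficulty resides in the preceding theorem. The only point deserving a moment's care is that $\sigma$-pseudocompactness delivers a \emph{countable} cover, so that one may take the network to be the cover itself---a convenience that is unavailable for the more general $\Sigma(\aleph_0)$ (Lindel\"{o}f $\Sigma$) spaces, where the cover may be uncountable and a genuinely separate countable network must be produced.
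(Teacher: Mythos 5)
Your proof is correct and takes essentially the same route as the paper, which deduces the corollary in one line by observing that $\sigma$-pseudocompact spaces are $\Sigma^-(\aleph_0)$ and then invoking the preceding theorem (groups being trivially retral). The details you fill in---taking the countable pseudocompact cover itself as the network $\mathcal{N}$, and passing from calibre $(\omega_1,\omega_1,2)$ to calibre $(\omega_1,2,2)$, i.e.\ ccc---are exactly the routine verifications the paper leaves implicit.
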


Recall \cite{COT} that if a space has $\mathcal{K}(M)$-ordered compact cover then it, and every closed subspace, is $\Sigma(\aleph_0)$, and noting that a subspace of a space is calibre $(\omega_1,\omega_1,2)$ if and only if its closure has the same calibre, we deduce:
\begin{cor}\label{c:km}
Every subgroup of a topological group with a $\mathcal{K}(M)$-ordered compact cover, where $M$ is separable metrizable, has calibre $(\omega_1,\omega_1,2)$, and hence is ccc.
\end{cor}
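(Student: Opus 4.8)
The plan is to reduce Corollary~\ref{c:km} to the immediately preceding theorem on $\Sigma^-(\aleph_0)$ retral spaces, using the two facts quoted just before the corollary's statement. The key chain of reasoning is: a topological group with a $\K(M)$-ordered compact cover is (by \cite{COT}) $\Sigma(\aleph_0)$, hence certainly $\Sigma^-(\aleph_0)$; every topological group is retral; so the theorem applies and gives calibre $(\omega_1,\omega_1,2)$ for the group itself. The only genuine work is pushing this down to arbitrary subgroups, which need not themselves carry a nice compact cover, and handling the calibre transfer carefully.

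First I would let $G$ be a topological group with a $\K(M)$-ordered compact cover, $M$ separable metrizable, and let $H$ be any subgroup. The natural move is to pass to $\cl{H}$, the closure of $H$ in $G$, which is itself a topological group (a subgroup). By the $\Sigma(\aleph_0)$ result quoted from \cite{COT}, $G$ \emph{and every closed subspace} is $\Sigma(\aleph_0)$; in particular $\cl{H}$, being closed in $G$, is $\Sigma(\aleph_0)$ and therefore $\Sigma^-(\aleph_0)$. Since $\cl{H}$ is a topological group it is retral, so the preceding theorem yields that $\cl{H}$ has calibre $(\omega_1,\omega_1,2)$.

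It then remains to transfer this calibre from $\cl{H}$ to $H$. Here I would invoke the observation recorded in the statement: a subspace of a space is calibre $(\omega_1,\omega_1,2)$ if and only if its closure has the same calibre. Since $H$ is dense in $\cl{H}$, and the Knaster calibre $(\omega_1,\omega_1,2)$ has finite $\mu$ (namely $\mu=2$), the density-based half of the earlier remark on relative calibres applies: a dense subspace has the same calibre $(\kappa,\lambda,\mu)$ as the whole space whenever $\mu$ is finite. Thus $H$ inherits calibre $(\omega_1,\omega_1,2)$ from $\cl{H}$. Finally, calibre $(\omega_1,\omega_1,2)$ is exactly the Knaster property, which implies ccc (via calibre $(\omega_1,2,2)$), giving the last clause.

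I expect the main subtlety to lie in the transfer step rather than in applying the theorem. One must be sure that the hypotheses of \cite{COT} really deliver $\Sigma(\aleph_0)$ for the \emph{closed} subspace $\cl{H}$ (not just for $G$), and that the calibre-descends-to-dense-subspace principle is being used in its legitimate finite-$\mu$ form; conflating it with the general version, which requires a retraction, would be the easy error to make. Once those two points are checked the argument is essentially a three-line composition of quoted results.
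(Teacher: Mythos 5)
Your proof is correct and follows exactly the route the paper intends: pass to the closed subgroup $\cl{H}$, apply the \cite{COT} fact that closed subspaces of a space with a $\K(M)$-ordered compact cover are $\Sigma(\aleph_0)$ (hence $\Sigma^-(\aleph_0)$), invoke retrality of the topological group $\cl{H}$ to get calibre $(\omega_1,\omega_1,2)$ from the preceding theorem, and then descend to the dense subgroup $H$ via the finite-$\mu$ calibre-transfer observation. This is precisely the argument the paper compresses into the remarks immediately before the corollary, so nothing is missing.
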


Combining the preceding corollary with Proposition~\ref{pr:k_cov_G} we see:
\begin{cor}
If $X$ has a $\mathcal{K}(M)$-ordered compact cover, where $M$ is separable metrizable, then $A(X)$, $F(X)$,
$L(X)$ and $L_p(X)$, and all their subgroups, have calibre $(\omega_1,\omega_1,2)$, and so are ccc.
\end{cor}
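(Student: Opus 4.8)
The plan is to deduce the statement from Corollary~\ref{c:km}: it suffices to produce, for each of $F(X)$, $A(X)$, $L(X)$ and $L_p(X)$, a single separable metrizable space $M'$ for which that free object carries a $\K(M')$-ordered compact cover. Note first that each of these objects is a topological group --- $F(X)$ and $A(X)$ by definition, and $L(X)$, $L_p(X)$ because every topological vector space is an abelian topological group under addition --- so once a $\K(M')$-ordered cover is in hand, Corollary~\ref{c:km} will give directly that they, and all of their subgroups, have calibre $(\omega_1,\omega_1,2)$, hence are ccc.

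By hypothesis $X$ has a $\K(M)$-ordered compact cover with $M$ separable metrizable, i.e.\ $\K(M) \tq (X,\K(X))$, equivalently $\K(M) \tq (\F(X),\K(X))$ since $\K(M)$ is directed. Writing $G$ for any of $F,A,L,L_p$, I would then chain the second Tukey relation of Proposition~\ref{pr:k_cov_G} with Lemma~\ref{l:FK_powers}: taking products with $\omega$ gives $\K(M)\times\omega \tq (\F(X),\K(X))\times\omega$, while Lemma~\ref{l:FK_powers}(1) (with the discrete space $\omega$, for which $\K(\omega)=[\omega]^{<\omega}\te\omega$) yields $(\F(X),\K(X))\times\omega \te (\F(X\times\omega),\K(X\times\omega))$, and Proposition~\ref{pr:k_cov_G} gives $(\F(X\times\omega),\K(X\times\omega)) \tq (\F(G(X)),\K(G(X)))$. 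By transitivity, $\K(M)\times\omega \tq (\F(G(X)),\K(G(X)))$; that is, each free object carries a $\K(M)\times\omega$-ordered compact cover.

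It remains to absorb the extra $\omega$ factor into $\K$ of a separable metrizable space. Let $\omega_d$ be the countable discrete space, so $\K(\omega_d)=[\omega]^{<\omega}\te\omega$. Since a compact subset of a disjoint sum is the disjoint union of a compact subset of each summand, $\K(M\oplus\omega_d)\te\K(M)\times\K(\omega_d)\te\K(M)\times\omega$. Setting $M'=M\oplus\omega_d$, which is again separable metrizable, we obtain $\K(M')\te\K(M)\times\omega \tq (\F(G(X)),\K(G(X)))$, so each of $F(X)$, $A(X)$, $L(X)$, $L_p(X)$ indeed has a $\K(M')$-ordered compact cover, and Corollary~\ref{c:km} applies verbatim to finish.

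There is no substantive obstacle here; the content lies entirely in Proposition~\ref{pr:k_cov_G} and Corollary~\ref{c:km}. The single point demanding care is the unavoidable $\times\,\omega$ factor produced when passing from $X$ to its free objects, together with the verification that it can be reabsorbed as $\K(M')$ for a separable metrizable $M'$ --- which is precisely why the hypothesis is stated for $M$ separable metrizable rather than for an arbitrary directed set, and why the disjoint-sum computation above is the crux of the argument.
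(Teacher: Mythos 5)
Your proof is correct and takes essentially the same route as the paper, whose one-line argument likewise combines Proposition~\ref{pr:k_cov_G} (in the form: a $P$-ordered compact cover of $X$ yields a $P\times\omega$-ordered compact cover of $G(X)$) with Corollary~\ref{c:km}. Your explicit absorption of the extra factor via $\K(M\oplus\omega_d)\te\K(M)\times\omega$ merely spells out a step the paper leaves implicit (compare its $A(\omega_1)$ example, where an $(\omega_1\times\omega)$-ordered cover is traded for a $\K(\mathbb{Q})$-ordered one).
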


Corollaries~\ref{c:spscpt} and~\ref{c:km} differ because the latter claims the ccc property for all subgroups, but the  former does not.
\begin{qu}
    Is every subgroup of a $\sigma$-pseudocompact group ccc?
\end{qu}
Note that if $G$ is a pseudocompact group then $\beta G$ is a (compact) group containing $G$ as a subgroup, hence if $H$ is a subgroup of $G$ then the closure of $H$ in $\beta G$ is a compact group, so ccc, and thus $H$ is ccc.

A space $X$ is a \emph{Maltsev} space if there is a continuous map $M:X^3 \to X$ such that $M(x,y,y)=x=M(y,y,x)$ for all $x,y \in X$. Retral spaces are Maltsev (if $r$ is a retraction from a group $G$ onto $X$ then set $M(x,y,z)=r(xy^{-1}z)$).
Pseudocompact Maltsev spaces are retral \cite{RU} but not all Maltsev spaces are retral \cite{GRS}.
\begin{qu} \

(i)    Is every $\sigma$-pseudocompact Maltsev space ccc? Retral?

(ii)    Is every Maltsev space with a $\K(M)$-ordered compact cover, where $M$ is separable metrizable, ccc? Retral?

(iii)    Is every $\Sigma^-(\aleph_0)$ Maltsev space ccc? Retral?
\end{qu}

\bibliographystyle{plain}

%\bibliography{references}

\end{document}